\providecommand{\U}[1]{\protect\rule{.1in}{.1in}}
\theoremstyle{plain}
\numberwithin{equation}{section}
\newcommand{\lap}{\mbox{$\bigtriangleup$}}
\newcommand{\grad}{\mbox{$\bigtriangledown$}}
\newtheorem{mthm}{Theorem}
\newtheorem{lem}{Lemma}[section]
\newtheorem{rem}{Remark}[section]
\begin{document}
\title[An overdetermined problem in Riesz-potential]{An overdetermined problem in Riesz-potential and fractional Laplacian}
\author{Guozhen Lu}
\author{Jiuyi Zhu}
\address{Guozhen Lu and Jiuyi Zhu\\
Department of Mathematics\\
Wayne State University\\
Detroit, MI 48202, USA\\
Emails: gzlu@math.wayne.edu and jiuyi.zhu@wayne.edu  }
\thanks{Research is partly supported by a US NSF grant \#DMS0901761.}
\date{}
\subjclass{31B10, 35N25} \keywords{Overdetermined problem, Riesz
potential, moving plane method in integral form, fractional
Laplacian.} \dedicatory{ }

 \begin{abstract}
The main purpose of this paper is to address two open questions
raised by W. Reichel in \cite{R2} on characterizations of balls in
terms of the Riesz potential and fractional Laplacian. For a bounded
$C^1$ domain $\Omega\subset \mathbb R^N$, we consider the
Riesz-potential
$$u(x)=\int_{\Omega}\frac{1}{|x-y|^{N-\alpha}} \,dy$$ for $2\leq
\alpha \not =N$. We show that $u=$ constant on $\partial \Omega$ if
and only if $\Omega$ is a ball. In the case of $ \alpha=N$, the
similar characterization is established for the logarithmic
potential $u(x)=\int_\Omega \log{\frac{1}{|x-y|}} \,dy$. We also
prove that such a characterization holds for the logarithmic Riesz potential $$u(x)=\int_\Omega
|x-y|^{\alpha-N}\log\frac{1}{|x-y|} \,dy$$ when the diameter of the
domain $\Omega$ is less than $e^{\frac{1}{N-\alpha}}$ in the case
when $\alpha-N$ is a nonnegative even  integer. This provides a
characterization for the overdetermined problem of the fractional
Laplacian. These results  answer two open questions in \cite{R2} to
some extent.

\end{abstract}
\maketitle

\section{Introduction}

It is well-known that the gravitational potential of a ball of
constant mass density is constant on the surface of the ball.
 It is shown by Fraenkel \cite{Fr} that this property indeed provides a characterization of balls.
In fact, Fraenkel proves the following

{\bf Theorem A \cite{Fr}}: Let $\Omega\subset \mathbb R^N$ be a
bounded domain and $\omega_N$ be the surface measure of the unit
sphere in $\mathbb R^N$. Consider
\begin{equation}
u(x)=\left \{ \begin{array}{lll} \frac{1}{2 \pi} \int_\Omega \log{\frac{1}{|x-y|}} \,dy, \qquad \qquad  & N=2,\\
\\
\frac{1}{(N-2)\omega_N} \int_\Omega \frac{1}{|x-y|^{N-2}} \,dy,
\qquad \qquad & N\geq 3.
\end{array} \right.
\label{Fr}
\end{equation}
If $u(x)$ is constant on $\partial \Omega$, then $\Omega$ is a ball.

This result has been extended by Reichel \cite{R2} to more general
Riesz potential, but under a more restrictive assumption on the
domain $\Omega$, i.e., $\Omega$ is assumed to be convex. In
\cite{R2},  Reichel considers the integral equation
\begin{equation}
u(x)=\left \{ \begin{array}{lll} \int_\Omega \log{\frac{1}{|x-y|}} \,dy, \qquad \qquad  & N=\alpha,\\
\\
\int_\Omega \frac{1}{|x-y|^{N-\alpha}} \,dy, \qquad \qquad & N\neq
\alpha,
\end{array} \right.
\label{int}
\end{equation}
and proves the following theorem.

{\bf Theorem B \cite{R2} }: Let $\Omega\subset \mathbb R^N$ be a
bounded convex domain and $\alpha >2$, if $u(x)$ is constant on
$\partial \Omega$, then $\Omega$ is a ball.

This more general Riesz potential is actually closely related to the
fractional Laplacian $(-\lap)^{\frac{\alpha}{2}}$ in $\mathbb R^N$.
Let $\mathbb{N}_0$ be the collection of nonnegative integers. It is
known that the fundamental solution $G(x,y)$ for pseudo-differential
operator $(-\lap)^{\frac{\alpha}{2}}$ in $\mathbb R^N$ has the
following representation
\begin{equation}
G(x,y)=\left\{
\begin{array}{ll}
\frac{\Gamma(\frac{N-\alpha}{2})}{2^\alpha \pi^{\frac{N}{2}}
\Gamma(\frac{\alpha}{2})}|x-y|^{\alpha-N},  \qquad \qquad
&\mbox{if}\ \ \frac{\alpha-N}{2}\not\in \mathbb{N}_0,\\
\\
\frac{(-1)^k}{2^{\alpha-1}\pi^{\frac{N}{2}}\Gamma({\frac{\alpha}{2}})}|x-y|^{\alpha-N}\log\frac{1}{|x-y|},
\qquad \qquad &\mbox{if}\ \ \frac{\alpha-N}{2}\in \mathbb{N}_0.
\end{array} \right.
\label{green}
\end{equation}

 We note that for the case of $\alpha=2$,  Fraenkel's result is under weaker assumption on the domain $\Omega$, namely, $\Omega$ only needs to be bounded and open in $\mathbb R^N$.
The surprising part for $\alpha=2$ is that there is neither
regularity nor convexity requirement for $\Omega$.  Thus, two open
problems were  raised by Reichel in \cite{R2}

{\bf Question 1.}  Is  Theorem B  true if we remove the convexity
assumption of $\Omega$?
\medskip

{\bf Question 2.} Is there an analogous result as Theorem B for
Riesz-Potential of the form
  \begin{equation}
  u(x)=\int_\Omega |x-y|^{\alpha-N}\log\frac{1}{|x-y|} \,dy?
  \label{log}
  \end{equation}\\
It is meaningful to study (\ref{log}) because in the case of
$\frac{\alpha-N}{2}\in \mathbb N_0$, up to some rescaling, the
kernel function in above integral is the fundamental solution of the
fractional Laplacian $(-\lap)^{\frac{\alpha}{2}}.$

Our goal is to address the above two open questions.

The first result we establish does remove the convexity assumption
in Theorem B.

\begin{mthm}
Let $\Omega$ ba a $C^1$ bounded domain. If $u$ in (\ref{int}) is
constant on $\partial\Omega$, then $\Omega$ is a ball. \label{1t}
\end{mthm}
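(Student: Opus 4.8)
The plan is to run the method of moving planes in integral form directly on the potential $u$, using only the strict monotonicity in $t>0$ of the kernel $K(t)=t^{\alpha-N}$ (with $K(t)=-\log t$ in the logarithmic case $\alpha=N$) together with the single overdetermined hypothesis $u\equiv c$ on $\partial\Omega$. Carried out in an arbitrary direction $e\in S^{N-1}$, the argument will show that $\Omega$ is symmetric about some hyperplane orthogonal to $e$; since each such reflection fixes the barycentre $b$ of $\Omega$, all these hyperplanes pass through $b$, so $\partial\Omega$ is a union of spheres centred at $b$, and the connectedness of $\Omega$ leaves only a ball or a spherical annulus --- the annulus being excluded by a direct computation showing that the radial potential it generates is not constant on $\partial\Omega$. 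A preliminary observation is that $\alpha\ge 2$ together with $\partial\Omega\in C^1$ makes $u\in C^1(\overline\Omega)$, so that the boundary value and the limits used below are legitimate.

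Fix $e=e_1$ and write $x^\lambda=(2\lambda-x_1,x_2,\dots,x_N)$ for the reflection across $T_\lambda=\{x_1=\lambda\}$, $\Omega_\lambda=\Omega\cap\{x_1>\lambda\}$, and $\Omega_\lambda'=\{x^\lambda:x\in\Omega_\lambda\}$. The crux of the comparison is the identity: whenever $\Omega_\lambda'\subseteq\overline\Omega$, then for every $x$ with $x_1>\lambda$,
\[ u(x)-u(x^\lambda)=\int_{D_\lambda}\bigl[K(|x-y|)-K(|x^\lambda-y|)\bigr]\,dy,\qquad D_\lambda:=\bigl(\Omega\cap\{y_1<\lambda\}\bigr)\setminus\Omega_\lambda', \]
obtained by splitting $\Omega$ along $T_\lambda$, changing variables $y\mapsto y^\lambda$ over $\Omega_\lambda$ (which exchanges $|x-y|$ with $|x^\lambda-y|$), and cancelling the resulting integral over $\Omega_\lambda'$. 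For $y\in D_\lambda$ one has $y_1<\lambda<x_1$, hence $|x-y|>|x^\lambda-y|$, so the integrand is strictly negative when $\alpha<N$ (or $\alpha=N$) and strictly positive when $\alpha>N$. Consequently, for every $x$ with $x_1>\lambda$, $u(x)-u(x^\lambda)$ is strictly negative (resp. strictly positive), unless $|D_\lambda|=0$, that is, unless $\Omega$ is symmetric about $T_\lambda$.

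Now run the moving plane. For $\lambda$ slightly below $a:=\sup_{x\in\Omega}x_1$ the cap $\Omega_\lambda$ is a small set clustered near the extremal point and, using $\partial\Omega\in C^1$, one checks $\Omega_\lambda'\subseteq\Omega$; decrease $\lambda$ to the first critical value $\lambda_0$ at which either (A) $\overline{\Omega_{\lambda_0}'}$ becomes internally tangent to $\partial\Omega$ at a point $p\notin T_{\lambda_0}$, or (B) $T_{\lambda_0}$ meets $\partial\Omega$ orthogonally at a point $p\in T_{\lambda_0}\cap\partial\Omega$. Assume $\Omega$ is not symmetric about $T_{\lambda_0}$. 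In case (A), set $q:=p^{\lambda_0}$; then $q\in\partial\Omega$ with $q_1>\lambda_0$ and $q^{\lambda_0}=p\in\partial\Omega$, so $u(q)=u(q^{\lambda_0})=c$, contradicting the strict inequality $u(q)\ne u(q^{\lambda_0})$ proved above. Hence in case (A) the domain is symmetric about $T_{\lambda_0}$.

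The main obstacle is case (B): an orthogonal intersection at the critical position, which the tangency argument does not resolve, and handling it is precisely where the convexity hypothesis of Theorem B is removed. The approach here follows the spirit of Serrin's corner lemma. The function $w(x):=u(x^{\lambda_0})-u(x)$ is nonnegative (resp. nonpositive) on $\overline{\Omega_{\lambda_0}}$, vanishes on $T_{\lambda_0}$, and --- since $u\equiv c$ on $\partial\Omega$ --- has vanishing tangential derivatives along $\partial\Omega$ near $p$; a careful expansion of $w$ at the orthogonal corner point $p$, using the $C^1$ regularity of $\partial\Omega$, the interior smoothness of $u$, and the explicit representation of $w$ as the integral of $K(|x^{\lambda_0}-y|)-K(|x-y|)$ over $D_{\lambda_0}$, is used to contradict the assumption that $\Omega$ is not symmetric about $T_{\lambda_0}$ (equivalently, that $w$ is of one strict sign on $\Omega_{\lambda_0}$). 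Thus $\Omega$ is symmetric about $T_{\lambda_0}$ in case (B) as well. Since this symmetry holds in every direction $e$, the reduction of the first paragraph shows that $\Omega$ is a ball. One must, finally, also verify under the $C^1$ hypothesis alone that the moving-plane procedure can be initiated and then continued up to $\lambda_0$; this, together with the corner analysis, is the technical core of the proof.
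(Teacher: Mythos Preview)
Your overall strategy coincides with the paper's: run the moving plane method in integral form, derive the identity
\[
u_\lambda(x)-u(x)=\int_{\Omega\setminus\overline{\Sigma_\lambda\cup\Sigma'_\lambda}}\bigl[K(|x_\lambda-y|)-K(|x-y|)\bigr]\,dy,
\]
and use strict monotonicity of $K$ to treat the tangency case. Your case~(A) matches Lemma~3 of the paper exactly, and your reduction at the end (barycentre plus exclusion of the annulus) is a harmless variant of the paper's direct appeal to Alexandrov's characterisation.

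The gap is in case~(B). You invoke ``the spirit of Serrin's corner lemma'' and speak of a ``careful expansion of $w$'' and ``vanishing tangential derivatives of $w$ along $\partial\Omega$'', but this is not the argument and those are not the relevant quantities. The paper's proof of case~(B) is a first-order, not a second-order, computation, and it is about $u$ itself rather than about $w$. Concretely: since $u\equiv c$ on $\partial\Omega$ and $\partial\Omega\in C^1$, the gradient $\nabla u(Q)$ is normal to $\partial\Omega$; orthogonality of $T_{\bar\lambda}$ to $\partial\Omega$ at $Q$ means $e_1$ is tangent to $\partial\Omega$ there, so $\partial_{x_1}u(Q)=0$. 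One then takes any sequence $x^i\in\Sigma_{\bar\lambda}\setminus T_{\bar\lambda}$ with $x^i\to Q$, fixes a ball $B\subset\subset\Omega_{\bar\lambda}$, and applies the Mean Value Theorem to the integrand in the identity above to obtain
\[
\frac{u(x^i_{\bar\lambda})-u(x^i)}{(x^i_{\bar\lambda}-x^i)\cdot e_1}
=\int_{\Omega_{\bar\lambda}}(\alpha-N)\,|\bar x^i_{\bar\lambda}-y|^{\alpha-N-2}\,(x^i_{\bar\lambda}-y)\cdot e_1\,dy
\ \ge\ C>0
\]
uniformly in $i$ (with the obvious logarithmic analogue when $\alpha=N$, and the opposite sign when $\alpha>N$), because on $B$ the factors are bounded away from zero with a fixed sign. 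Letting $i\to\infty$ contradicts $\partial_{x_1}u(Q)=0$. No corner-type Hopf lemma is needed; the whole point of the integral formulation is that this elementary first-derivative estimate replaces it. Your write-up should state $\partial_{x_1}u(Q)=0$ explicitly and carry out this difference-quotient bound, rather than alluding to an unspecified expansion of $w$.
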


 As far as Question 2 is concerned, we partially solve it under some additional assumption on the diameter of the domain $\Omega$. Since we are only interested in the case when $\alpha>N$, we will assume this when we address Question 2.

\begin{mthm}
Assume $\alpha>N$. Let $\Omega$ be a $C^1$ bounded domain with
$diam \,\Omega< e^{\frac{1}{N-\alpha}}$. Thus, $\Omega$ is a ball if
$u(x)$ in (\ref{log}) is constant on $\partial\Omega$. \label{2t}
\end{mthm}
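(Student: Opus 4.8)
The plan is to carry out the moving plane method in integral form, following the scheme of the proof of Theorem~\ref{1t}; the only structurally new point is a monotonicity property of the new kernel on the range of distances that actually occurs, and this is exactly what the diameter hypothesis guarantees. Write $\beta=\alpha-N>0$ and $K(r)=r^{\alpha-N}\log\frac1r=-r^{\beta}\log r$, so that $u(x)=\int_{\Omega}K(|x-y|)\,dy$. Then $K'(r)=-r^{\beta-1}(\beta\log r+1)$, hence $K$ is strictly increasing on $(0,e^{-1/\beta})=(0,e^{1/(N-\alpha)})$ and, since $e^{1/(N-\alpha)}<1$, strictly positive there; moreover $K$ and $K'$ extend continuously by the value $0$ at $r=0$. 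Thus on any interval $(0,d)$ with $d\le e^{1/(N-\alpha)}$, $K$ plays the same role that the increasing kernel $r^{\alpha-N}$ ($\alpha>N$) plays in Theorem~\ref{1t}, and the hypothesis $\mathrm{diam}\,\Omega<e^{1/(N-\alpha)}$ will keep every relevant radius inside such an interval.

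Fix a direction, say $e_{1}$, and for $\lambda\in\mathbb R$ put $T_{\lambda}=\{x_{1}=\lambda\}$, $\Sigma_{\lambda}=\{x_{1}>\lambda\}$, $x^{\lambda}=(2\lambda-x_{1},x_{2},\dots,x_{N})$, and $\Sigma_{\lambda}(\Omega)=\Omega\cap\Sigma_{\lambda}$. Splitting the integral for $u$ across $T_{\lambda}$, substituting $y\mapsto y^{\lambda}$ in the left half, and using $|x-y^{\lambda}|=|x^{\lambda}-y|$ and $|x^{\lambda}-y^{\lambda}|=|x-y|$, one gets for every $x\in\Sigma_{\lambda}$ the identity
\[
u(x^{\lambda})-u(x)=\int_{\Sigma_{\lambda}}\bigl(K(|x^{\lambda}-y|)-K(|x-y|)\bigr)\bigl(\chi_{\Omega}(y)-\chi_{\Omega}(y^{\lambda})\bigr)\,dy .
\]
Suppose the reflected cap satisfies $\Sigma_{\lambda}(\Omega)^{\lambda}\subset\Omega$. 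Then $\chi_{\Omega}(y)-\chi_{\Omega}(y^{\lambda})$ vanishes unless $y\notin\Omega$ and $y^{\lambda}\in\Omega$, in which case it equals $-1$; and for such a $y$ together with $x\in\overline\Omega\cap\Sigma_{\lambda}$, since $x,y^{\lambda}\in\overline\Omega$ and $x,y\in\Sigma_{\lambda}$,
\[
|x-y|\le|x^{\lambda}-y|=|x-y^{\lambda}|\le\mathrm{diam}\,\Omega<e^{1/(N-\alpha)},
\]
so both radii lie where $K$ is strictly increasing and $K(|x^{\lambda}-y|)\ge K(|x-y|)$. Hence $u(x^{\lambda})\le u(x)$ on $\overline\Omega\cap\Sigma_{\lambda}$, and for each such $x$ the inequality is strict unless $\chi_{\Omega}(y)=\chi_{\Omega}(y^{\lambda})$ for a.e.\ $y\in\Sigma_{\lambda}$, i.e.\ unless $\Omega$ is symmetric about $T_{\lambda}$. (It is precisely here, not earlier, that the diameter bound is used: although $x^{\lambda}$ need not belong to $\overline\Omega$, the containment forces the two radii above to stay below $\mathrm{diam}\,\Omega$.)

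Since $\Omega$ is a bounded $C^{1}$ domain, a standard thinness argument gives $\Sigma_{\lambda}(\Omega)^{\lambda}\subset\Omega$ for all $\lambda$ just below $\lambda_{1}:=\max_{\overline\Omega}x_{1}$. Let $\lambda_{0}=\inf\{\mu:\Sigma_{\lambda}(\Omega)^{\lambda}\subset\Omega\ \text{for all}\ \lambda\in(\mu,\lambda_{1})\}$; by continuity $\Sigma_{\lambda_{0}}(\Omega)^{\lambda_{0}}\subset\overline\Omega$, so the conclusions of the previous paragraph hold at $\lambda_{0}$. If $\Omega$ is symmetric about $T_{\lambda_{0}}$ we are done in the direction $e_{1}$; otherwise $u(x^{\lambda_{0}})<u(x)$ for every $x\in\overline\Omega\cap\Sigma_{\lambda_{0}}$, and we must reach a contradiction, and this is where the hypothesis $u\equiv c$ on $\partial\Omega$ enters. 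The moving process can have stopped at $\lambda_{0}$ for only two reasons: (i) the reflected cap is internally tangent to $\partial\Omega$, i.e.\ $q^{\lambda_{0}}\in\partial\Omega$ for some $q\in\partial\Omega\cap\Sigma_{\lambda_{0}}$; or (ii) $T_{\lambda_{0}}$ is orthogonal to $\partial\Omega$ at some $p\in T_{\lambda_{0}}\cap\partial\Omega$. In case (i), $u(q)=c=u(q^{\lambda_{0}})$ directly contradicts $u(q^{\lambda_{0}})<u(q)$, so (i) is impossible in the strict regime. Case (ii) is the genuinely delicate one: one must show that the strict inequality, valid up to the portion of $\partial\Omega$ lying in $\Sigma_{\lambda_{0}}$, forces $\Sigma_{\lambda}(\Omega)^{\lambda}\subset\Omega$ to persist for $\lambda$ slightly below $\lambda_{0}$, contradicting the minimality of $\lambda_{0}$. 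This is an adaptation of Serrin's corner lemma to the integral formulation, and—because $\Omega$ is only $C^{1}$ (not $C^{2}$, and not convex as in Theorem~B)—the local analysis near $p$, in which one estimates via $C^{1}$-regularity (and again the sign of $K'$ on the admissible range) the measure of the part of $\Sigma_{\lambda}(\Omega)$ whose reflection leaves $\Omega$ and absorbs it into the strict gap, is the main obstacle of the proof. Once the strict case is excluded, $\Omega$ is symmetric with respect to a hyperplane orthogonal to $e_{1}$; since $e_{1}$ was arbitrary and every such symmetry hyperplane passes through the barycenter of $\Omega$, the set $\Omega$ is invariant under all reflections fixing its barycenter and is therefore a ball.
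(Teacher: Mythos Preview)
Your setup and the treatment of the tangency case are essentially the paper's: the key integral identity, the monotonicity of $K(r)=r^{\alpha-N}\log\frac1r$ on $(0,e^{1/(N-\alpha)})$, the bound $|x^{\lambda}-y|=|x-y^{\lambda}|\le\mathrm{diam}\,\Omega$ via $x,y^{\lambda}\in\overline\Omega$, and the conclusion $u(x^{\lambda_0})<u(x)$ unless $\Omega$ is symmetric---all of this matches Lemmas~5 and~6 (up to the orientation of $\Sigma_\lambda$).

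The genuine gap is your case (ii). Your proposed continuation argument is misguided: the stopping position $\lambda_0$ is determined by the \emph{geometric} containment $\Sigma_{\lambda}(\Omega)^{\lambda}\subset\Omega$, and once $T_{\lambda_0}$ is orthogonal to $\partial\Omega$ at $p$, the reflected cap will exit $\Omega$ near $p$ for any $\lambda<\lambda_0$, regardless of whether $u(x^{\lambda_0})<u(x)$ is strict. No amount of strict inequality for $u$ can restore a purely geometric inclusion, so ``persisting past $\lambda_0$'' is not available, and no corner-lemma adaptation is needed or helpful here. What the paper actually does (Lemma~7) is a first-order argument at $p$: since $u\equiv c$ on $\partial\Omega$ and $\Omega\in C^{1}$, the gradient $\nabla u(p)$ is parallel to the outward normal; orthogonality of $T_{\lambda_0}$ to $\partial\Omega$ at $p$ means this normal lies in $T_{\lambda_0}$, hence $\partial_{x_1}u(p)=0$. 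On the other hand, take $x^{i}\in\Sigma_{\lambda_0}\cap\overline\Omega$ with $x^{i}\to p$ and compute the difference quotient $\bigl(u((x^{i})^{\lambda_0})-u(x^{i})\bigr)/\bigl(((x^{i})^{\lambda_0}-x^{i})\cdot e_1\bigr)$ directly from your integral identity; using the mean value theorem and $K'(r)<0$ on the relevant range together with a fixed ball $B\subset\subset\Omega_{\lambda_0}$, this quotient is bounded away from zero by a negative constant uniformly in $i$, contradicting $\partial_{x_1}u(p)=0$. This step, not a Serrin corner lemma, is what closes case (ii).
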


\begin{rem}
In the above two theorems, if the conclusion that $\Omega$ is a ball
is verified, then we can easily deduce that $u(x)$ is radially
symmetric with respect to the center of the ball.
\end{rem}

There has been extensive study in the literature about
overdetermined problems in elliptic differential equations and
integral equations. In his seminal paper \cite{Se}, Serrin showed
that the overdetermined boundary value determines the geometry of
the underlying set. This is, if $\Omega$ is a bounded $C^2$ domain
and $u\in C^2(\bar\Omega)$ satisfies the following
\begin{equation}
\left \{ \begin{array}{ll} \lap u=-1    & \mbox{in} \ \ \Omega, \\
\\
u=0, \qquad \ \frac{\partial u}{\partial n}=constant &\mbox{on}\ \
\Omega,
\end{array}
\right.
\end{equation}
then $\Omega$ is a ball and $u$ is radially symmetric with respect
to its center of the ball. Serrin's proof  is based on what is
nowadays called the moving planes method relying on the maximum
principle of solutions to the differential equations,  which is
originally due to Alexandrov, and has been later used to derive
further symmetry results for more general elliptic equations.
  Important progress as for the moving plane methods since then are the works of Gidas-Ni-Nirenberg \cite{GNN},
   Caffarelli-Gidas-Spruck \cite{CGS},  to just name some of the early works in this direction.

Immediately after Serrin's paper, Weinberger \cite{W} obtained a
very short proof of the same result, using the maximum principle
applied to an auxiliary function. However, compared to Serrin's
approach, Weinberger's proof  relies crucially on the linearity of
the Laplace operator.

Since the work of \cite{Se}, many results are obtained about
overdetermined problems. The interested reader may refer to
\cite{AB}, \cite{B}, \cite{BK},  \cite{BNST}, \cite{BNST1}, \cite{CS},
\cite{EP}, \cite{FG}, \cite{FGK}, \cite{FK}, \cite{FV}, \cite{G},
\cite{GL}, \cite{HPP}, \cite{Lim}, \cite{Liu}, \cite{M}, \cite{MR},
\cite{PP}, \cite{PS}, \cite{P}, \cite{Sh}, \cite{Si},  \cite{WX}
and references therein, for more general elliptic equations.  See
also \cite{R1} and reference therein for overdetermined problems in
an exterior domain or general domain. In \cite{BNST}, an alternative
shorter proof of Serrin's result, not relying explicitly on the
maximum principle has been  given, where they deduce some global
information concerning the geometry of the solution.

Overdetermined problems are important from the point of view of
mathematical physics. Many models in fluid mechanics, solid
mechanics, thermodynamics, and electrostatics are relevant to the
overdetermined Dirichlet or Newmann boundary problems of elliptic
partial differential equations.
 We refer the reader to the article \cite{FG} for a nice introduction in that aspect.

Instead of a volume potential, single layer potential is also
considered in overdetermined problems. A single layer potential is
given by
\begin{equation}
u(x)=\left \{ \begin{array}{lll}
 A \int_{\partial\Omega}\frac{-1}{2 \pi} \log{\frac{1}{|x-y|}} \,d\sigma_y, \qquad \qquad  & N=2,\\
\\
A \int_{\partial\Omega}
\frac{1}{(N-2)\omega_N}\frac{1}{|x-y|^{N-2}}\,d\sigma_y, \qquad
\qquad & N\geq 3,
\end{array} \right.
\label{sig}
\end{equation}
where $A>0$ is the constant source density on the boundary of the
domain $\Omega$. If $u$ is constant in $\bar\Omega$, then $\Omega$
can be proved to be a ball under different smoothness assumption  on
the  domain $\Omega$.
 See \cite{M} for the case of $n=2$ and
\cite{R1} for the case of $n\geq 3$, and also some related works in
\cite{Lim} and \cite{Sh}. We also refer the reader to the book of C.
Kenig \cite{K} on this subject of layer potential.

Generally speaking,  two approaches are widely applied in dealing
with overdetermined problems. One is the classical moving plane
method. In \cite{Se}, the moving plane method with a sophisticated
version of Hopf boundary maximum principle plays a very important
role in the proof. The other way is based on an equality of Rellich
type, as well as an interior maximum principle, see \cite{W}. Our
approach is a new variant of moving plane method - Moving plane in
integral forms. It is much different from the traditional methods of
moving planes used for partial differential equations. Instead of
relying on the differentiability and maximum principles of the
structure, a global integral norm is estimated. The method of moving
planes in integral forms can be adapted to obtain symmetry and
monotonicity for solutions. The method of moving planes on integral
equations was  developed in the work of  W. Chen, C. Li and B. Ou
\cite{CLO}, see also  Y.Y. Li \cite{Li}, the book by W. Chen and C.
Li \cite{CL1} and an exhaustive list of  references therein, where
the symmetry of solutions in the entire space was proved. Moving
plane method in integral form over bounded domains requires some additional 
efforts and has  been  carried out recently in symmetry
problems arising from the integral equations over bounded domains,
see the work of D. Li, G. Strohmer and L. Wang \cite{LSW}.

We end this introduction with the following remark concerning the
characterization of balls by using the Bessel potential. The Bessel
kernel $g_\alpha$ in $\mathbb R^N$ with $\alpha\geq 0$ is defined by
\begin{equation}
g_\alpha(x)=\frac{1}{r(\alpha)}\int_0^{\infty}
\exp(-\frac{\pi}{\delta}|x|^2)\exp(-\frac{\delta}{4\pi})\delta^{\frac{\alpha-N-2}{2}}
\,d\delta, \label{ex1}
\end{equation}
where $r(\alpha)=(4\pi)^{\frac{\alpha}{2}}\Gamma(\frac{\alpha}{2}).$\\

In  the paper \cite{HLZ}, we consider the Bessel potential type
equation:
\begin{equation}
u(x)=\int_\Omega g_\alpha(x-y) \,dy. \label{b1}
\end{equation}

Overdetermined problems for Bessel potential over a bounded domain in $\mathbb R^N$ have
been recently studied in \cite{HLZ}. For instance,
 the following theorem is proved in \cite{HLZ}, among some other results:
\begin{mthm}
Let $\Omega$ ba a $C^1$ bounded domain in $\mathbb R^N$. If $u$ in
(\ref{b1}) is constant on $\partial\Omega$, then $\Omega$ is a ball.
\label{4t}
\end{mthm}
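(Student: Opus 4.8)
The plan is to run the method of moving planes directly on the integral representation of $u$, without passing through any differential equation. It is enough to prove that for every unit vector $e$ the domain $\Omega$ is symmetric with respect to a hyperplane orthogonal to $e$, and that along the way the reflected caps never touch $\partial\Omega$ from the inside: the first statement forces all these symmetry hyperplanes to pass through the centre of mass of $\Omega$, while the second prevents $\Omega$ from surrounding a cavity, so together they give that $\Omega$ is a ball. Fix $e=e_1$ and write $T_\lambda=\{x_1=\lambda\}$, $\Sigma_\lambda=\{x_1>\lambda\}$, $x^\lambda=(2\lambda-x_1,x_2,\dots,x_N)$, $\Omega_\lambda=\Omega\cap\Sigma_\lambda$, $\Omega_\lambda^{*}=\{x^\lambda:x\in\Omega_\lambda\}$ and $D_\lambda=\Omega\cap\{x_1<\lambda\}$. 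The only properties of the Bessel kernel we shall use are that $g_\alpha$ is positive, radial, strictly decreasing in $|x|$, smooth off the origin and integrable on $\mathbb R^N$; hence $u=g_\alpha*\chi_\Omega$ is bounded and continuous, and of class $C^1$ near $\partial\Omega$ when $\alpha>1$. Note that $g_\alpha$ is strictly decreasing for every $\alpha$, unlike the Riesz kernel $|x-y|^{\alpha-N}$ with $\alpha>N$; this is why no restriction on the size of $\Omega$ will be needed.

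For $\lambda$ slightly below $a:=\sup_{\Omega}x_1$, the $C^1$ regularity of $\partial\Omega$ gives $\Omega_\lambda^{*}\subseteq\Omega$; let $\lambda_0$ be the infimum of the $\lambda$ for which $\Omega_\mu^{*}\subseteq\Omega$ holds for all $\mu\in[\lambda,a)$, so that $\lambda_0$ is finite and $\Omega_{\lambda_0}^{*}\subseteq\Omega$. Whenever $\Omega_\lambda^{*}\subseteq\Omega$, decompose $\Omega=\Omega_\lambda\cup\Omega_\lambda^{*}\cup R_\lambda$ disjointly up to null sets, with $R_\lambda:=D_\lambda\setminus\Omega_\lambda^{*}$; substituting $y\mapsto y^\lambda$ on $\Omega_\lambda^{*}$ and using that reflection in $T_\lambda$ is an isometry, the contributions of $\Omega_\lambda$ and $\Omega_\lambda^{*}$ cancel identically, and one is left with
\[
w_\lambda(x):=u(x^\lambda)-u(x)=\int_{R_\lambda}\bigl[g_\alpha(x^\lambda-y)-g_\alpha(x-y)\bigr]\,dy,\qquad x\in\overline{\Sigma_\lambda}.
\]
If $x_1>\lambda$ and $y\in R_\lambda$, then $y_1<\lambda$ and $x_1^\lambda<\lambda$, so $|x^\lambda-y|<|x-y|$; hence, as soon as $\Omega$ is not symmetric about $T_\lambda$ (equivalently $|R_\lambda|>0$), strict monotonicity of $g_\alpha$ gives $w_\lambda(x)>0$ for every $x$ with $x_1>\lambda$. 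No use of the overdetermined condition has been made yet.

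It remains to prove that $\Omega$ is symmetric about $T_{\lambda_0}$. Suppose not; then $w_{\lambda_0}>0$ on $\{x_1>\lambda_0\}$, while the failure of $\Omega_\lambda^{*}\subseteq\Omega$ for $\lambda$ just below $\lambda_0$ must, by the usual geometric lemma for $C^1$ domains, come from one of two configurations. In the first, $\overline{\Omega_{\lambda_0}^{*}}$ meets $\partial\Omega$ at a point $r\notin T_{\lambda_0}$; then $r\in\partial\Omega$, $r_1<\lambda_0$, and one checks that $r^{\lambda_0}$ also lies on $\partial\Omega$ with $(r^{\lambda_0})_1>\lambda_0$, so that the overdetermined condition $u\equiv c$ on $\partial\Omega$ forces $w_{\lambda_0}(r^{\lambda_0})=u(r)-u(r^{\lambda_0})=0$, contradicting the strict positivity above. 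In the second, there is $q\in\partial\Omega\cap T_{\lambda_0}$ with $\nu(q)\perp e_1$; then $\partial\Omega$ and its reflection $\partial\Omega_{\lambda_0}^{*}$ are $C^1$ and tangent at $q$ to the same hyperplane. Differentiating the identity for $w_{\lambda_0}$ (legitimate once $\nabla g_\alpha\in L^1_{\mathrm{loc}}$, i.e. $\alpha>1$) and using $q_1-y_1>0$ for $y\in R_{\lambda_0}$, one gets $\partial_{x_1}w_{\lambda_0}(q)=-2\int_{R_{\lambda_0}}(\partial_1 g_\alpha)(q-y)\,dy=:c_0>0$; so along a $C^1$ arc $x(t)\in\partial\Omega$ through $q$ with $x(t)_1=\lambda_0+t$ we have $u(x(t)^{\lambda_0})=u(x(t))+w_{\lambda_0}(x(t))=c+c_0t+o(t)$. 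But $x(t)^{\lambda_0}\in\overline\Omega$ lies within distance $o(t)$ of $\partial\Omega$ (two tangent $C^1$ surfaces separate sublinearly), and since $u\equiv c$ on $\partial\Omega$ with $u\in C^1$ we also have $u(x(t)^{\lambda_0})-c=O\bigl(\mathrm{dist}(x(t)^{\lambda_0},\partial\Omega)\bigr)=o(t)$ — a contradiction. Thus neither configuration can occur, so $\Omega_\lambda^{*}\subseteq\Omega$ would persist below $\lambda_0$, contradicting its definition, and $\Omega$ is symmetric about $T_{\lambda_0}$; since $e_1$ was arbitrary, $\Omega$ is a ball.

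The main obstacle is the second configuration — ruling out that the critical hyperplane is orthogonal to $\partial\Omega$ — which is the integral-equation analogue of Serrin's corner lemma and Hopf's boundary lemma and is sensitive to the regularity of both $\Omega$ and $u$. When $0<\alpha\le1$ the gradient of $u$ need not extend continuously up to $\partial\Omega$ (the relevant single-layer potential $\int_{\partial\Omega}g_\alpha(x-y)\,d\sigma_y$ may diverge), so the pointwise estimate above has to be replaced by control of $\|w_{\lambda_0}\|$ on a thin boundary layer near $q$; providing this global-integral-norm estimate, in the spirit of \cite{CLO} and \cite{LSW}, is the technical heart of the argument in \cite{HLZ}. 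The narrow-cap starting step, the cancellation identity for $w_\lambda$, and the passage from symmetry in all directions to ``ball'' are, by contrast, routine.
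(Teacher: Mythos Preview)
The paper does not itself prove this theorem; it is quoted from \cite{HLZ} and no proof is given here, so there is nothing to compare against directly. What one can say is that your argument is the natural adaptation, to the Bessel kernel, of the moving-plane-in-integral-form proofs the paper gives for Theorems \ref{1t} and \ref{2t}: the same decomposition $u_\lambda-u=\int_{R_\lambda}[\,g_\alpha(x^\lambda-y)-g_\alpha(x-y)\,]\,dy$, the same two critical configurations, and the same contradiction mechanism. Your observation that $g_\alpha$ is strictly radially decreasing for \emph{every} $\alpha>0$ (unlike $|x|^{\alpha-N}$ when $\alpha>N$) is exactly the reason no diameter restriction is needed, and this is the right point to stress.

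Two remarks. First, your treatment of the orthogonal case is more elaborate than necessary. Once you know $\nu(q)\perp e_1$ and $u\equiv c$ on $\partial\Omega$, the $C^1$ regularity of $u$ gives $\nabla u(q)\parallel\nu(q)$, hence $\partial_{x_1}u(q)=0$; since $q^{\lambda_0}=q$, this forces $\partial_{x_1}w_{\lambda_0}(q)=-2\,\partial_{x_1}u(q)=0$, which already contradicts your integral computation $\partial_{x_1}w_{\lambda_0}(q)=c_0>0$. This is precisely the route the paper takes in Lemma \ref{la}, via difference quotients, and it avoids the detour through ``two tangent $C^1$ surfaces separate sublinearly''. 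Second, you are honest that the argument as written needs $\alpha>1$ so that $\nabla g_\alpha\in L^1_{\mathrm{loc}}$ and $u\in C^1$ up to $\partial\Omega$; for $0<\alpha\le 1$ you defer to \cite{HLZ}. That is a genuine gap relative to the full statement, but you have identified it correctly and named the missing ingredient.
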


It is well-known that (\ref{b1}) is closely related to the following
fractional equation
$$(I-\lap)^{\frac{\alpha}{2}}u=\chi_\Omega.$$
In the case of $\alpha=2$, it turns out to be the ground state of
the Schr$\ddot{o}$dinger equation.

The paper is organized as follows. In Section 2, we show Theorem 1.
In Section 3, we carry out the proof of Theorem 2.  Throughout this
paper, the positive constant $C$ is frequently used in the paper. It
may differ from line to line,  even within the same line. It also
may depends on $u$ in some cases. 

Finally, we thank Dr. Xiaotao Huang for his comments on our earlier draft of this paper.

\section{Proof of Theorem \ref{1t}}

In this section, we will prove Theorem \ref{1t} by adapting the
moving plane method in integral forms, see \cite{CLO}. Since we are
dealing with the case of bounded domains, we modify the method
accordingly (see also \cite{LSW}, \cite{CZ}).

We first introduce some notations. Choose any direction and, rotate
coordinate system if it is necessary such that $x_1$-axis is
parallel to it. For any $\lambda\in \mathbb R$, define
$$T_\lambda=\{(x_1,...,x_n)\in \Omega | x_1=\lambda \}.$$

Since $\Omega$ is bounded, if $\lambda$ is sufficiently negative,
the intersection of $T_\lambda$ and $\Omega$ is empty. Then, we move
the plane $T_\lambda$ all the way to the right until it intersects
$\Omega$. Let
$$\lambda_0=min\{\lambda :T_\lambda\cap\bar\Omega\not= \emptyset\}.$$
For $\lambda>\lambda_0$, $T_\lambda$ cuts off $\Omega$. We define
$$ \Sigma_\lambda=\{x\in \Omega |x_1<\lambda\}.$$
Set
$$x_\lambda=\{2\lambda-x_1,...,x_n\} $$
and
$$
\Sigma^\prime_\lambda=\{x_\lambda\in\Omega|x\in\Sigma_\lambda\}.$$

At the beginning of $\lambda>\lambda_0$, $ \Sigma^\prime_\lambda$
remains within $\Omega$. As the plane keeps moving to the right,  $
\Sigma^\prime_\lambda$ will still stay in $\Omega$ until at least
one of the following events occurs:

(i)$ \Sigma^\prime_\lambda$ is internally tangent to the boundary of
$\Omega$ at some point $P_\lambda$ not on $T_\lambda$.
\medskip

(ii) $T_\lambda$ reaches a position where it is orthogonal to the
boundary of $\Omega$ at some point $Q$.\\ Let $\bar\lambda$ be the
first value such that at least one of the above positions is
reached.

We assert that $\Omega$ must be symmetric about $T_{\bar\lambda}$;
i.e.,
\begin{equation}
\Sigma_{\bar\lambda}\cup T_{\bar\lambda}\cup
\Sigma^\prime_{\bar\lambda}=\Omega. \label{key}
\end{equation}
If this assertion is verified, for any given direction in $\mathbb
R^N$, there also exists a plane $T_{\bar\lambda}$ such that $\Omega$
is symmetric about $T_{\bar\lambda}$. Moreover, $\Omega$ is
connected. Then the only domain with those properties is a ball, see
\cite{Al}.

In order to assert (\ref{key}), we introduce
$$u_\lambda(x)=u(x_\lambda),$$
$$
\Omega_\lambda=\Omega\backslash(\overline{\Sigma_\lambda\cup\Sigma^\prime_\lambda}).$$

 We first establish  some lemmas. Throughout the paper we assume
 $\alpha\geq 2$.

 \begin{lem}
 Let $l\in\mathbb N$ with $1\leq l<\alpha$. Then for any solution in
 (\ref{int}), $u\in C^l(\mathbb R^N)$ and differentiation of order $l$
 can be taken under the integral.
\label{mono}
 \end{lem}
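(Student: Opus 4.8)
The plan is to prove that $u(x)=\int_\Omega |x-y|^{\alpha-N}\,dy$ is $C^l$ for $1\le l<\alpha$, by differentiating under the integral sign and checking that each differentiation produces an integrand whose singularity is still locally integrable in $\mathbb R^N$. First I would observe that the kernel $k(z)=|z|^{\alpha-N}$ is smooth away from the origin, and that each partial derivative $\partial^\beta_x k(x-y)$ with $|\beta|=m$ is a finite sum of terms of the form (a polynomial in the components of $x-y$, homogeneous of degree $m$) times $|x-y|^{\alpha-N-2m}$, hence is bounded in modulus by $C|x-y|^{\alpha-N-m}$. So the formal $l$-th derivative has integrand bounded by $C|x-y|^{\alpha-N-l}$, and since $\alpha-N-l > -N$ exactly when $l<\alpha$, this is integrable over the bounded set $\Omega$, uniformly for $x$ in a compact set.

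Then I would run the standard induction on $l$: to pass from $u\in C^{m}$ with $\nabla^{m}u(x)=\int_\Omega \nabla^{m}_x k(x-y)\,dy$ to the same statement for $m+1$ (provided $m+1<\alpha$), fix a direction $e_i$ and a compact neighborhood $K$ of the base point. For $x,x+he_i\in K$ one writes the difference quotient of $\nabla^{m}u$ as $\int_\Omega \frac{\nabla^{m}_x k(x+he_i-y)-\nabla^{m}_x k(x-y)}{h}\,dy$ and shows it converges to $\int_\Omega \partial_i \nabla^{m}_x k(x-y)\,dy$. The justification is dominated convergence: by the mean value theorem the integrand equals $\partial_i\nabla^{m}_x k(\xi-y)$ for some $\xi$ on the segment between $x$ and $x+he_i$, and by the homogeneity bound above this is dominated by $C\sup_{\xi\in K}|\xi-y|^{\alpha-N-(m+1)}$, which (shrinking $h$ so all relevant $\xi$ stay in a slightly larger compact set) is an $L^1(\Omega)$ function of $y$ independent of $h$ because $m+1<\alpha$. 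Continuity of $x\mapsto \int_\Omega \partial^\beta_x k(x-y)\,dy$ for $|\beta|=m+1$ follows from the same domination together with continuity of the integrand in $x$ for $y\ne x$. The base case $l=1$ needs $u$ itself continuous, which again is dominated convergence using the $|x-y|^{\alpha-N}\in L^1_{loc}$ bound.

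The one point that needs a little care — and which I'd flag as the main (mild) obstacle — is the domination near the diagonal: when $x$ ranges over a compact set $K$, the quantity $\sup_{\xi\in \tilde K}|\xi-y|^{\alpha-N-m}$ as a function of $y\in\Omega$ is not literally $|x-y|^{\alpha-N-m}$, so one must check it is still in $L^1(\Omega)$. This is true because for $y$ outside a fixed neighborhood of $\tilde K$ the sup is bounded, while for $y$ near $\tilde K$ one can cover $\tilde K$ by finitely many balls and on each, $\sup_{\xi}|\xi-y|^{\alpha-N-m}\le C\,|y-c_j|^{\alpha-N-m}$ up to a bounded error, with $c_j$ the center; integrability then reduces to $\int_{B} |y-c_j|^{\alpha-N-m}\,dy<\infty$, valid precisely because $\alpha-N-m>-N$, i.e. $m<\alpha$, which is exactly the hypothesis $l\le m+1<\alpha+1$, equivalently $l<\alpha$ at each stage. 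Once this integrability is in hand, everything else is the routine differentiation-under-the-integral machinery, and the lemma follows.
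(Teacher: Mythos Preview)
The paper does not actually prove this lemma; it simply says the proof is standard and cites Reichel \cite{R2}. Your outline is precisely the standard argument that reference contains, so in that sense you are aligned with the paper.

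Two small points are worth noting. First, you treat only the kernel $|x-y|^{\alpha-N}$; equation (\ref{int}) also includes the logarithmic case $\alpha=N$, but the same homogeneity bookkeeping goes through since $\partial^\beta\log\frac{1}{|z|}$ is bounded by $C|z|^{-|\beta|}=C|z|^{\alpha-N-|\beta|}$.

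Second, the domination step you flagged really does not work as you wrote it. When $\alpha-N-(m+1)<0$, the function $y\mapsto \sup_{\xi\in \tilde K}|\xi-y|^{\alpha-N-(m+1)}$ equals $+\infty$ on $\tilde K\cap\Omega$, so it is not in $L^1(\Omega)$; even restricting $\xi$ to the segment from $x$ to $x+he_i$, the resulting dominant $\operatorname{dist}(y,\text{segment})^{\alpha-N-(m+1)}$ is integrable only when $m+1<\alpha-1$, losing one order. The clean fix is to avoid a uniform dominant altogether: first show that $v(x):=\int_\Omega \partial_i\nabla^m_x k(x-y)\,dy$ is continuous by splitting $\Omega$ into $\Omega\cap B(x,2\delta)$ and its complement (the near part contributes $O(\delta^{\alpha-(m+1)})$ uniformly by a change of variables, the far part converges by uniform continuity of $g$ away from the diagonal); then write the difference quotient as $\frac{1}{h}\int_0^h v(x+te_i)\,dt$ via Fubini, which tends to $v(x)$ by continuity of $v$. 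This recovers the full range $m+1<\alpha$. It is a routine repair, not a conceptual gap.
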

 \begin{proof}
 The proof is standard. We refer the reader  to \cite{R2}.
 \end{proof}

 \begin{lem}
For
 $\lambda_0<\lambda<\bar\lambda$ and $u(x)$ satisfying (\ref{int}), we have

(i) If $N\geq \alpha$, $u_\lambda(x)>u(x)$ for any $x\in
\Sigma_\lambda.$
\medskip

(ii) If $N< \alpha$, $u_\lambda(x)<u(x)$ for any $x\in
\Sigma_\lambda.$

\end{lem}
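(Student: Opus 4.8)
The plan is to reduce the comparison between $u_\lambda$ and $u$ to an integral over the part $\Omega_\lambda$ of $\Omega$ that is not accounted for by the reflection across $T_\lambda$, where the sign of the integrand is transparent. Write $K(r)=r^{\alpha-N}$ when $N\neq\alpha$ and $K(r)=-\log r$ when $N=\alpha$, so that $u(x)=\int_\Omega K(|x-y|)\,dy$ and $u_\lambda(x)=u(x_\lambda)=\int_\Omega K(|x_\lambda-y|)\,dy$; these integrals converge for $\alpha\geq2$. Since $\lambda<\bar\lambda$ we have $\Sigma'_\lambda\subset\Omega$, and $\Omega$ is the disjoint union of $\Sigma_\lambda\cup\Sigma'_\lambda$ and $\Omega_\lambda$ up to a null set. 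The reflection $y\mapsto y_\lambda$ is a measure-preserving isometric involution that carries $\Sigma_\lambda\cup\Sigma'_\lambda$ onto itself, so that
\[
\int_{\Sigma_\lambda\cup\Sigma'_\lambda}K(|x_\lambda-y|)\,dy=\int_{\Sigma_\lambda\cup\Sigma'_\lambda}K(|x_\lambda-y_\lambda|)\,dy=\int_{\Sigma_\lambda\cup\Sigma'_\lambda}K(|x-y|)\,dy .
\]
Subtracting the representations of $u_\lambda(x)$ and $u(x)$, the contributions over $\Sigma_\lambda\cup\Sigma'_\lambda$ cancel, leaving
\[
u_\lambda(x)-u(x)=\int_{\Omega_\lambda}\bigl[K(|x_\lambda-y|)-K(|x-y|)\bigr]\,dy .
\]

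Next I would record the elementary distance inequality that controls the integrand. Fix $x\in\Sigma_\lambda$, so $x_1<\lambda$; any $y\in\Omega_\lambda$ satisfies $y_1\geq\lambda$ (a point of $\Omega$ with $y_1<\lambda$ lies in $\Sigma_\lambda$), and since $\{y_1=\lambda\}$ is null we have $y_1>\lambda$ for a.e.\ $y\in\Omega_\lambda$. For such $x$ and $y$,
\[
|x-y|^2-|x_\lambda-y|^2=(x_1-y_1)^2-(2\lambda-x_1-y_1)^2=4(\lambda-x_1)(y_1-\lambda)>0,
\]
hence $|x_\lambda-y|<|x-y|$. Therefore the sign of $K(|x_\lambda-y|)-K(|x-y|)$ is fixed by the monotonicity of $K$: if $N\geq\alpha$ then $K$ (whether $r^{\alpha-N}$ with $\alpha-N\leq0$ or $-\log r$) is strictly decreasing, so the integrand is positive for a.e.\ $y\in\Omega_\lambda$; if $N<\alpha$ then $K(r)=r^{\alpha-N}$ with $\alpha-N>0$ is strictly increasing, so the integrand is negative for a.e.\ $y\in\Omega_\lambda$.

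Finally, to turn these into the strict pointwise inequalities claimed, I would verify that $|\Omega_\lambda|>0$ for every $\lambda\in(\lambda_0,\bar\lambda)$. This follows from the construction of $\bar\lambda$: writing $R_1=\max_{\bar\Omega}x_1$, the condition $\Sigma'_\lambda\subset\Omega$ forces the reflection of a leftmost point of $\Omega$, whose first coordinate is $2\lambda-\lambda_0$, to lie in $\bar\Omega$, hence $2\lambda-\lambda_0\leq R_1$ and so $\bar\lambda\leq(\lambda_0+R_1)/2$; consequently, for $\lambda<\bar\lambda$ the nonempty open set $\Omega\cap\{x_1>2\lambda-\lambda_0\}$ is disjoint from $\overline{\Sigma_\lambda}\cup\overline{\Sigma'_\lambda}$, hence lies in $\Omega_\lambda$. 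Combined with the previous step this gives $u_\lambda(x)>u(x)$ when $N\geq\alpha$ and $u_\lambda(x)<u(x)$ when $N<\alpha$, for every $x\in\Sigma_\lambda$. I expect the only genuinely delicate points to be this positivity of $|\Omega_\lambda|$ — which is exactly where $\lambda<\bar\lambda$ is used, together with the use of $\Sigma'_\lambda\subset\Omega$ in the cancellation — and the bookkeeping of the null sets ($\Omega\cap T_\lambda$ and the boundary pieces) in the decomposition of $\Omega$; the kernel comparison itself is immediate.
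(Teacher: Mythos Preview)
Your proof is correct and follows essentially the same route as the paper: decompose $\Omega$ into $\Sigma_\lambda\cup\Sigma'_\lambda$ and $\Omega_\lambda$, use the reflection isometry to cancel the symmetric part, and read off the sign of the remaining integral over $\Omega_\lambda$ from the kernel's monotonicity together with $|x_\lambda-y|<|x-y|$. You are in fact more careful than the paper, which simply asserts the strict inequality without explicitly checking that $|\Omega_\lambda|>0$ for $\lambda\in(\lambda_0,\bar\lambda)$.
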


\begin{proof}
 For $x\in\Sigma_\lambda$, in the case of $N=\alpha$, we rewrite
$u(x)$ and $u_\lambda(x)$ as

$$u(x)=\int_{\Sigma_\lambda} \log\frac{1}{|x-y|} \,dy + \int_{\Sigma_\lambda} \log\frac{1}{|x_\lambda-y|} \,dy
+\int_{\Omega_\lambda} \log\frac{1}{|x-y|} \,dy,$$ and

$$u_\lambda(x)=\int_{\Sigma_\lambda} \log\frac{1}{|x_\lambda-y|} \,dy + \int_{\Sigma_\lambda} \log\frac{1}{|x-y|} \,dy
+\int_{\Omega_\lambda} \log\frac{1}{|x_\lambda-y|} \,dy.$$ Then
\begin{equation}
u_\lambda(x)-u(x)=\int_{\Omega_\lambda}\log\frac{|x-y|}{|x_\lambda-y|}
\,dy. \label{Com}
\end{equation}
 Since $|x-y|>|x_\lambda-y|$ for $x\in \Sigma_\lambda$ and
$y\in \Omega_\lambda$, then
 $$u_\lambda(x)>u(x).$$
 While in the case of $ N\not =\alpha$, $u_\lambda(x)$ and $u(x)$ have the
 following representations respectively:
 $$u(x)=\int_{\Sigma_\lambda}|x-y|^{\alpha-N}\,dy + \int_{\Sigma_\lambda} |x_\lambda-y|^{\alpha-N} \,dy
+\int_{\Omega_\lambda} |x-y|^{\alpha-N} \,dy,$$
 and
 $$u_\lambda(x)=\int_{\Sigma_\lambda}|x_\lambda-y|^{\alpha-N}\,dy + \int_{\Sigma_\lambda} |x-y|^{\alpha-N} \,dy
+\int_{\Omega_\lambda} |x_\lambda-y|^{\alpha-N} \,dy.$$ Thus,
\begin{equation}
u_\lambda(x)-u(x)=\int_{\Omega_\lambda}(|x_\lambda-y|^{\alpha-N}-|x-y|^{\alpha-N})
\,dy, \label{Com1}
\end{equation}
 Note that  $|x-y|>|x_\lambda-y|$ for $x\in \Sigma_\lambda$ and
$y\in \Omega_\lambda$. Thus, (i) and (ii) are concluded.

\end{proof}

\begin{lem}
Assume that $u(x)$ satisfies (\ref{int}) and suppose
$\lambda=\bar\lambda$ in the first case; i.e.
$\Sigma^\prime_\lambda$ is internally tangent to the boundary of
$\Omega$ at some point $P_{\bar\lambda}$ not on $T_{\bar\lambda}$,
then $\Sigma_{\bar\lambda}\cup
T_{\bar\lambda}\cup\Sigma^\prime_{\bar\lambda}=\Omega$. \label{le}
\end{lem}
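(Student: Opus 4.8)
### Proof proposal for Lemma 1.3

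The plan is to argue by contradiction: assume that $\Sigma_{\bar\lambda}\cup T_{\bar\lambda}\cup\Sigma'_{\bar\lambda}$ is a proper subset of $\Omega$, so that $\Omega_{\bar\lambda}$ has positive measure, and derive a contradiction with the overdetermined condition $u\equiv c$ on $\partial\Omega$. Since $u=u_{\bar\lambda}$ on $T_{\bar\lambda}$ trivially and $u=u_{\bar\lambda}=c$ at the tangency point $P_{\bar\lambda}$ (because $P_{\bar\lambda}\in\partial\Omega$ and its reflection $(P_{\bar\lambda})_{\bar\lambda}$ also lies on $\partial\Omega$ by internal tangency), I would first record that the function $w_{\bar\lambda}:=u_{\bar\lambda}-u$ vanishes at $P_{\bar\lambda}$. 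On the other hand, by the representation formulas \eqref{Com}--\eqref{Com1} from the previous lemma, evaluated now at $\lambda=\bar\lambda$ and at $x=P_{\bar\lambda}$, one has
$$
w_{\bar\lambda}(P_{\bar\lambda})=\int_{\Omega_{\bar\lambda}}\Bigl(|(P_{\bar\lambda})_{\bar\lambda}-y|^{\alpha-N}-|P_{\bar\lambda}-y|^{\alpha-N}\Bigr)\,dy
$$
in the case $N\neq\alpha$, and the analogous logarithmic expression when $N=\alpha$. The key geometric observation is that since $P_{\bar\lambda}\in\Sigma'_{\bar\lambda}$ is reflected from a point of $\Sigma_{\bar\lambda}$, we have $P_{\bar\lambda,1}>\bar\lambda$, hence for every $y\in\Omega_{\bar\lambda}$ (which lies in the half-space $\{y_1>\bar\lambda\}$) the strict inequality $|(P_{\bar\lambda})_{\bar\lambda}-y|>|P_{\bar\lambda}-y|$ holds. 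Therefore the integrand is strictly positive (respectively strictly negative, in the appropriate monotonicity direction dictated by the sign of $\alpha-N$) on all of $\Omega_{\bar\lambda}$.

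The main step is then to conclude that $|\Omega_{\bar\lambda}|=0$: if $\Omega_{\bar\lambda}$ had positive measure the integral above would be strictly nonzero, contradicting $w_{\bar\lambda}(P_{\bar\lambda})=0$. Once $|\Omega_{\bar\lambda}|=0$, since $\Omega$ is open and $\Omega_{\bar\lambda}=\Omega\setminus\overline{\Sigma_{\bar\lambda}\cup\Sigma'_{\bar\lambda}}$ is open, it must be empty, which gives exactly \eqref{key}. Some care is needed to justify that $w_{\bar\lambda}(P_{\bar\lambda})=0$: one uses that $u$ is continuous (Lemma \ref{mono}), that $u(P_{\bar\lambda})=c$ since $P_{\bar\lambda}\in\partial\Omega$, and that $u_{\bar\lambda}(P_{\bar\lambda})=u((P_{\bar\lambda})_{\bar\lambda})=c$ because internal tangency of $\Sigma'_{\bar\lambda}$ at $P_{\bar\lambda}$ forces the mirror point $(P_{\bar\lambda})_{\bar\lambda}$ to also belong to $\partial\Omega$.

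The point I expect to require the most care — the main obstacle — is the passage from the continuity/limit statement to the pointwise evaluation of the integral formula at the boundary point $P_{\bar\lambda}$: the representations \eqref{Com}--\eqref{Com1} were derived for interior $x\in\Sigma_\lambda$ with $\lambda<\bar\lambda$, so one must check that they persist in the limit $\lambda\uparrow\bar\lambda$ and $x\to P_{\bar\lambda}$. This is handled by dominated convergence, noting that the kernels $|x-y|^{\alpha-N}$ (and $\log|x-y|$) are locally integrable in $y$ near $P_{\bar\lambda}$ since $\alpha\geq 2>0$, so the integrals over $\Omega_\lambda$ converge to the integral over $\Omega_{\bar\lambda}$; a secondary subtlety is that $P_{\bar\lambda}$ itself lies in $\overline{\Sigma'_{\bar\lambda}}$ rather than $\Sigma_{\bar\lambda}$, so strictly speaking one evaluates the limit of $w_\lambda(x)$ as $x\to P_{\bar\lambda}$ through $\Sigma_{\bar\lambda}$ and uses continuity of both sides. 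With these limiting arguments in place, the strict sign of the integrand on $\Omega_{\bar\lambda}$ forces $|\Omega_{\bar\lambda}|=0$ and the lemma follows.
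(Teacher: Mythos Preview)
Your proposal is correct and follows essentially the same approach as the paper: argue by contradiction, use the integral identity \eqref{Com}/\eqref{Com1} to see that if $\Omega_{\bar\lambda}\neq\emptyset$ then $u(P_{\bar\lambda})\neq u(P)$, contradicting the constant boundary condition. The only cosmetic difference is that you evaluate the identity at $P_{\bar\lambda}\in\overline{\Sigma'_{\bar\lambda}}$ while the paper evaluates at its reflection $P\in\overline{\Sigma_{\bar\lambda}}$; your extra care about continuity at the boundary is harmless (indeed the identity \eqref{Com1} actually holds for all $x$ and directly at $\lambda=\bar\lambda$, so no limiting argument is strictly needed).
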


\begin{proof}

When $N\geq \alpha$, thanks to Lemma \ref{mono},
$u_{\bar\lambda}(x)\geq u(x)$ for $x\in \Sigma_{\bar\lambda}.$ While
$ N<\alpha$, $u_{\bar\lambda}(x)\leq u(x)$ for $x\in
\Sigma_{\bar\lambda}.$ We argue by contradiction. Suppose $
\Sigma_{\bar\lambda}\cup
T_{\bar\lambda}\cup\Sigma^\prime_{\bar\lambda}\varsubsetneqq\Omega;$
that is, $\Omega_{\bar\lambda}\not=\emptyset.$ At $
P_{\bar\lambda}$, from (\ref{Com}) and (\ref{Com1}),
$u(P_{\bar\lambda})>u(P)$ in the case of $ N\geq \alpha$. It is a
contradiction since $P_{\bar\lambda}, P\in
\partial\Omega$ and $u(P_{\bar\lambda})=u(P)=$ constant. From the
same reason, $u(P_{\bar\lambda})<u(P)$ when $N<\alpha$. It also
contradicts the fact that $u$ is constant on the boundary.
Therefore, the lemma is completed.

\end{proof}

\begin{lem}
Assume that $u(x)$ satisfies (\ref{int}) and suppose that the second
case occurs: i.e. $T_{\bar\lambda}$ reaches a position where is
orthogonal to the boundary of $\Omega$ at some point $Q$, then,
$\Sigma_{\bar\lambda}\cup
T_{\bar\lambda}\cup\Sigma^\prime_{\bar\lambda}=\Omega$. \label{la}
\end{lem}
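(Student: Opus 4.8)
The plan is to argue by contradiction, and — in contrast with the first case (Lemma \ref{le}), where one compares two boundary \emph{values} of $u$ at the off-plane tangency point — to extract the contradiction from first-order information at $Q$, computing $\partial_{x_{1}}(u_{\bar\lambda}-u)(Q)$ in two different ways. Suppose $\Sigma_{\bar\lambda}\cup T_{\bar\lambda}\cup\Sigma^{\prime}_{\bar\lambda}\varsubsetneqq\Omega$, i.e.\ $\Omega_{\bar\lambda}\neq\emptyset$; then $\Omega_{\bar\lambda}$ is a nonempty open set and $\Omega_{\bar\lambda}\subset\{x_{1}>\bar\lambda\}$. Write $w(x):=u_{\bar\lambda}(x)-u(x)$. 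Exactly as in the derivation of (\ref{Com}) and (\ref{Com1}) — but \emph{without} the restriction $x\in\Sigma_{\bar\lambda}$, since only $\Sigma^{\prime}_{\bar\lambda}\subset\Omega$ is used and that holds at $\bar\lambda$ — one has, for every $x\in\mathbb R^{N}$,
\[
w(x)=\int_{\Omega_{\bar\lambda}}\bigl(|x_{\bar\lambda}-y|^{\alpha-N}-|x-y|^{\alpha-N}\bigr)\,dy
\]
when $N\neq\alpha$, and $w(x)=\int_{\Omega_{\bar\lambda}}\log\frac{|x-y|}{|x_{\bar\lambda}-y|}\,dy$ when $N=\alpha$. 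By Lemma \ref{mono} (applicable since $\alpha\geq 2$) we have $u\in C^{1}(\mathbb R^{N})$, hence $w\in C^{1}(\mathbb R^{N})$.

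The boundary computation: since the hyperplane $\{x_{1}=\bar\lambda\}$ meets $\partial\Omega$ orthogonally at $Q$, the vector $e_{1}$ is tangent to the $C^{1}$ surface $\partial\Omega$ at $Q$; since $u$ is constant on $\partial\Omega$, the gradient $\nabla u(Q)$ is normal to $\partial\Omega$ at $Q$, so $\partial_{x_{1}}u(Q)=\nabla u(Q)\cdot e_{1}=0$. As $Q_{1}=\bar\lambda$ we have $Q_{\bar\lambda}=Q$, and from $\partial_{x_{1}}u_{\bar\lambda}(x)=-(\partial_{x_{1}}u)(x_{\bar\lambda})$ we get $\partial_{x_{1}}u_{\bar\lambda}(Q)=-\partial_{x_{1}}u(Q)=0$. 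Hence $\partial_{x_{1}}w(Q)=0$. The potential computation: since $\alpha>1$ the kernels $|x-y|^{\alpha-N}$ and (for $N=\alpha$) $\log\frac1{|x-y|}$ generate $C^{1}$ potentials of $\chi_{\Omega_{\bar\lambda}}$ up to $\overline{\Omega_{\bar\lambda}}$, because $|x-y|^{\alpha-N-1}$, resp.\ $|x-y|^{-1}$, is locally integrable; thus differentiation under the integral is legitimate at $x=Q$ and yields
\[
\partial_{x_{1}}w(Q)=-2(\alpha-N)\int_{\Omega_{\bar\lambda}}|Q-y|^{\alpha-N-2}(\bar\lambda-y_{1})\,dy
\]
when $N\neq\alpha$, and $\partial_{x_{1}}w(Q)=2\int_{\Omega_{\bar\lambda}}\frac{\bar\lambda-y_{1}}{|Q-y|^{2}}\,dy$ when $N=\alpha$. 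On $\Omega_{\bar\lambda}$ one has $\bar\lambda-y_{1}<0$ while the remaining factor is strictly positive, so each integral is nonzero — indeed $\partial_{x_{1}}w(Q)>0$ if $\alpha>N$ and $\partial_{x_{1}}w(Q)<0$ if $\alpha\leq N$. This contradicts $\partial_{x_{1}}w(Q)=0$. Therefore $\Omega_{\bar\lambda}=\emptyset$, that is, $\Sigma_{\bar\lambda}\cup T_{\bar\lambda}\cup\Sigma^{\prime}_{\bar\lambda}=\Omega$, which is (\ref{key}).

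The step I expect to be the main obstacle is the justification that the Riesz (or logarithmic) potential of $\chi_{\Omega_{\bar\lambda}}$ is of class $C^{1}$ \emph{up to the boundary}, with derivative obtained by differentiating the kernel, so that one may legitimately insert the boundary point $Q$ (which may lie on $\partial\Omega_{\bar\lambda}$) into the differentiated formula. This rests precisely on $\alpha>1$, which makes $|x-y|^{\alpha-N-1}$ (resp.\ $|x-y|^{-1}$ when $N=\alpha\geq2$) locally integrable, together with a routine mollification/approximation argument that also supplies the continuity of $\partial_{x_{1}}w$ needed to pass to the limit $x\to Q$. A secondary point that needs care is deducing "$e_{1}$ is tangent to $\partial\Omega$ at $Q$" from the orthogonality of $\{x_{1}=\bar\lambda\}$ and $\partial\Omega$ at $Q$; this is exactly where the $C^{1}$ regularity of $\partial\Omega$ enters.
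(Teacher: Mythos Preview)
Your argument is correct and follows essentially the same strategy as the paper: obtain $\partial_{x_1}u(Q)=0$ from the orthogonality of $T_{\bar\lambda}$ and $\partial\Omega$ at $Q$, then derive a contradiction from the integral representation (\ref{Com})/(\ref{Com1}) over $\Omega_{\bar\lambda}$. The only difference is in execution of the second step: you differentiate under the integral directly at $Q$ (justified, as you note, by local integrability of $|x-y|^{\alpha-N-1}$ for $\alpha>1$, which is exactly the content of Lemma~\ref{mono}), whereas the paper takes a sequence $x^i\to Q$ in $\Sigma_{\bar\lambda}\setminus T_{\bar\lambda}$ and bounds the difference quotient $\dfrac{u(x^i_{\bar\lambda})-u(x^i)}{(x^i_{\bar\lambda}-x^i)\cdot e_1}$ away from zero via the mean value theorem applied to the kernel --- thereby sidestepping the very boundary-differentiation issue you flagged as the main obstacle. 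Your route is slightly cleaner; the paper's is slightly more elementary.
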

\begin{proof}

 Since $u(x)$ is constant on the boundary and $\Omega\in C^1$,
 $\grad u$ is parallel to the normal at $Q$. As implied in the
 second case, $\frac{\partial u}{\partial x_1}|_Q=0$. We denote the coordinate of $Q$ by $z$. Suppose
 $\Omega_{\bar\lambda}\not=\emptyset$, there exits a ball $
 B\subset\subset
 \Omega_{\bar\lambda}$.
 Choose a sequence $\{x^i\}^\infty_1\in\Sigma_{\bar\lambda}\setminus T_{\bar\lambda}$ such that
$x^i\to z$ as $i\to \infty$. It is easy to see that
$x^i_{\bar\lambda}\to z$ as $i\to \infty$. Since $ B\subset\subset
 \Omega_{\bar\lambda},$ we can also find a $ \delta$ such that $ diam
 \Omega>|x^i_{\bar\lambda}-y|>\delta$ for any $y\in B$ and any $x^i_{\bar\lambda}$.

  If
 $N=\alpha$, by (\ref{Com}),
 $$u(x^i_{\bar\lambda})-u(x^i)=\int_{\Omega_{\bar\lambda}}\log\frac{|x^i-y|}{|x^i_{\bar\lambda}-y|}
 \,dy.$$
 Let $e_1=(1,0,\cdots,0)\in \mathbb R^N$, then $(x^i_{\bar\lambda}-x^i)\cdot e_1$ is the first component of
  $(x^i_{\bar\lambda}-x^i)$.
  By the Mean Value theorem,
 \begin{eqnarray}
 \frac{u(x^i_{\bar\lambda})-u(x)}{(x^i_{\bar\lambda}-x^i)\cdot e_1} &=&
 \int_{\Omega_{\bar\lambda}}\frac{\log|x^i-y|-\log|x^i_{\bar\lambda}-y|}{(x^i_{\bar\lambda}-x^i)\cdot e_1}\,dy \nonumber \\
& = & \int_{\Omega_{\bar\lambda}}\frac{(y-\bar
x^i_{\bar\lambda})\cdot e_1}{|y-\bar x^i_{\bar\lambda}|^2}\,dy \nonumber \\
 & > & C\int_B \frac{1}{|diam\,\,\Omega|^2}\,dy  \nonumber \\
 &>& C, \nonumber \\
 \label{Co}
 \end{eqnarray}
 where $\bar
x^i_{\bar\lambda}$ is some point between $ x^i_{\bar\lambda}$ and
$x^i$. Nevertheless,
$$\lim_{i\to
\infty}\frac{u(x^i_{\bar\lambda})-u(x^i)}{(x^i_{\bar\lambda}-x^i)\cdot
e_1}=\frac{\partial u}{\partial x_1}|_Q=0,$$ which contradicts
(\ref{Co}). Therefore, $\Omega_{\bar\lambda}=\emptyset.$

In the case of $N>\alpha$, similarly we have

\begin{eqnarray}
\frac{u(x^i_{\bar\lambda})-u(x^i)}{(x^i_{\bar\lambda}-x^i)\cdot
e_1}&=&\int_{\Omega_{\bar\lambda}}\frac{|x^i_{\bar\lambda}-y|^{\alpha-N}-|x^i-y|^{\alpha-N}}
{(x^i_{\bar\lambda}-x^i)\cdot e_1} \,dy \nonumber \\
&=&\int_{\Omega_{\bar\lambda}}(\alpha-N)|\bar
x^i_{\bar\lambda}-y|^{\alpha-N-2}((x^i_{\bar\lambda}-y)\cdot
e_1)\,dy
\nonumber \\
&>& \int_{B}(\alpha-N)|\bar
x^i_{\bar\lambda}-y|^{\alpha-N-2}((x^i_{\bar\lambda}-y)\cdot e_1)\,dy \nonumber \\
 &>& C.
\end{eqnarray}
It also contradicts  $\frac{\partial u}{\partial x_1}|_Q=0$, thus
$\Omega_{\bar\lambda}=\emptyset$.

 The same idea can be applied to
the case of $N<\alpha$ with minor modification. In conclusion,
$\Sigma_{\bar\lambda}\cup
T_{\bar\lambda}\cup\Sigma^\prime_{\bar\lambda}=\Omega$ when the
second case occurs.
\end{proof}

Combining Lemma (\ref{le}) and Lemma (\ref{la}), Theorem \ref{1t} is
implied.

\section{Proof of Theorem \ref{2t}}

In this section, we will prove theorem \ref{2t} under some
restriction on the diameter of
 $\Omega$. Since we are mainly interested in
 the case of
$\frac{\alpha-N}{2}\in \mathbb N_0$. This is the case when the
fundamental solution of $(-\bigtriangleup)^{\frac{\alpha}{2}}$ has
the representation (\ref{green}). Therefore, we will assume
$\alpha>N$ in this section. Obviously, $u \in C^{1}(\mathbb R^N)$ in
(\ref{log}). We begin with establishing  several lemmas.

\begin{lem}

For $ \lambda_0<\lambda<\bar\lambda$, assume $u(x)$ satisfies
(\ref{log}) with $diam\, \Omega< e^{\frac{1}{N-\alpha}}$, then
$u_\lambda(x)<u(x)$ for any $x\in \Sigma_\lambda$.
\end{lem}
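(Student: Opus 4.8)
The plan is to reduce the statement to a one--variable monotonicity fact for which the diameter restriction is exactly the sharp threshold. Write $g(r)=r^{\alpha-N}\log\frac1r$, so that $u(x)=\int_\Omega g(|x-y|)\,dy$. The first and essentially only substantive step is to observe that $g$ is strictly increasing on $\bigl(0,e^{1/(N-\alpha)}\bigr)$: indeed
\[
g'(r)=r^{\alpha-N-1}\Bigl[(\alpha-N)\log\tfrac1r-1\Bigr],
\]
and since $\alpha>N$, for $0<r<e^{1/(N-\alpha)}=e^{-1/(\alpha-N)}$ one has $\log\frac1r>\frac1{\alpha-N}$, hence $(\alpha-N)\log\frac1r>1$ and $g'(r)>0$. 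Because $\mathrm{diam}\,\Omega<e^{1/(N-\alpha)}$, every distance $|x-y|$ with $x,y\in\overline{\Omega}$ lies in this interval, so $g$ is strictly increasing on the whole range of arguments that ever occur.

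Next I would carry out the usual reflection splitting, just as in (\ref{Com})--(\ref{Com1}). For $x\in\Sigma_\lambda$, decompose $\Omega$ up to a null set as $\Sigma_\lambda\cup\Sigma'_\lambda\cup\Omega_\lambda$ and use that $y\mapsto y_\lambda$ is an isometry (so that $|x-y_\lambda|=|x_\lambda-y|$) to write
\[
u(x)=\int_{\Sigma_\lambda}g(|x-y|)\,dy+\int_{\Sigma_\lambda}g(|x_\lambda-y|)\,dy+\int_{\Omega_\lambda}g(|x-y|)\,dy,
\]
\[
u_\lambda(x)=\int_{\Sigma_\lambda}g(|x_\lambda-y|)\,dy+\int_{\Sigma_\lambda}g(|x-y|)\,dy+\int_{\Omega_\lambda}g(|x_\lambda-y|)\,dy.
\]
Subtracting, the two integrals over $\Sigma_\lambda$ cancel, leaving
\[
u_\lambda(x)-u(x)=\int_{\Omega_\lambda}\bigl(g(|x_\lambda-y|)-g(|x-y|)\bigr)\,dy.
\]

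Finally I would combine this with the elementary reflection inequality $|x_\lambda-y|<|x-y|$, valid for $x\in\Sigma_\lambda$ (so $x_1<\lambda$) and $y\in\Omega_\lambda$ (so $y_1>\lambda$): the transverse coordinates of $x$ and $x_\lambda$ coincide, while $|2\lambda-x_1-y_1|=|(\lambda-x_1)-(y_1-\lambda)|<(\lambda-x_1)+(y_1-\lambda)=|x_1-y_1|$. Since both $|x_\lambda-y|$ and $|x-y|$ lie in $\bigl(0,\mathrm{diam}\,\Omega\bigr)\subset\bigl(0,e^{1/(N-\alpha)}\bigr)$, the strict monotonicity of $g$ gives $g(|x_\lambda-y|)-g(|x-y|)<0$ for a.e.\ $y\in\Omega_\lambda$; and because $\lambda<\bar\lambda$ forces $\Omega_\lambda$ to be a nonempty open set, the integral is strictly negative, i.e.\ $u_\lambda(x)<u(x)$ on $\Sigma_\lambda$. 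The only place needing genuine care is the first step: it is precisely the hypothesis $\mathrm{diam}\,\Omega<e^{1/(N-\alpha)}$ that keeps $g$ increasing, so that reflection decreases the kernel; without it this comparison --- and hence the whole moving--plane scheme of Section~2 --- would fail, while everything after that step is identical bookkeeping.
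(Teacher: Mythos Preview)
Your proof is correct and follows essentially the same approach as the paper: the same reflection decomposition yielding $u_\lambda(x)-u(x)=\int_{\Omega_\lambda}\bigl(g(|x_\lambda-y|)-g(|x-y|)\bigr)\,dy$, together with the same one-variable monotonicity computation (the paper phrases it as $s^{\alpha-N}\log s$ being strictly decreasing, which is exactly your $g(s)=-s^{\alpha-N}\log s$ being strictly increasing on $(0,e^{1/(N-\alpha)})$). Your write-up is in fact slightly more careful, since you explicitly verify the elementary inequality $|x_\lambda-y|<|x-y|$ and observe that $\Omega_\lambda$ has positive measure for $\lambda<\bar\lambda$, which is needed for the strictness of the conclusion.
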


\begin{proof}

Since $|x_\lambda-y_\lambda|=|x-y|,$ and
$|x_\lambda-y|=|x-y_\lambda|,$ we write $u(x)$ and $u_\lambda(x)$ in
the following forms:
\begin{eqnarray*}
u(x)&=&
\int_{\Sigma_\lambda}|x-y|^{\alpha-N}\log\frac{1}{|x-y|}\,dy+\int_{\Sigma_\lambda}|x_\lambda-y|^{\alpha-N}\log\frac{1}{|x_\lambda-y|}\,dy
\\ &&{}+ \int_{\Omega_\lambda}|x-y|^{\alpha-N}\log\frac{1}{|x-y|}\,dy,
\end{eqnarray*}
and
\begin{eqnarray*}
u_\lambda(x)&=&\int_{\Sigma_\lambda}|x_\lambda-y|^{\alpha-N}\log\frac{1}{|x_\lambda-y|}\,dy+\int_{\Sigma_\lambda}|x-y|^{\alpha-N}\log\frac{1}{|x-y|}\,dy
\\ &&{}+
\int_{\Omega_\lambda}|x_\lambda-y|^{\alpha-N}\log\frac{1}{|x_\lambda-y|}\,dy.
\end{eqnarray*}
Then,
\begin{equation}
u_\lambda(x)-u(x)=\int_{\Omega_\lambda}|x-y|^{\alpha-N}\log|x-y|\,dy-\int_{\Omega_\lambda}|x_\lambda-y|^{\alpha-N}\log|x_\lambda-y|\,dy.
\label{pw}
\end{equation}
 We consider the function $s^{\alpha-N}\log s.$ Note $\alpha>N$, thus
$$(s^{\alpha-N}\log s)^\prime=s^{\alpha-N-1}[(\alpha-N)\log s+1]<0,$$
whenever $s<e^{\frac{1}{N-\alpha}}.$ Since $|x-y|>|x_\lambda-y|$ for
$x\in \Sigma_\lambda , \, y\in \Omega_\lambda$, and  $diam \Omega<
e^{\frac{1}{N-\alpha}}$, we easily infer that $u_\lambda(x)<u(x)$
for any $x\in\Sigma_\lambda.$

\end{proof}

\begin{lem}
$u(x)$ satisfies (\ref{log}) and suppose $\lambda=\bar\lambda$ in
the first case; i.e. $\Sigma^\prime_{\bar\lambda}$ is internally
tangent to the boundary of $\Omega$ at some point $P_{\bar\lambda}$
not on $T_{\bar\lambda}$, then $\Sigma_{\bar\lambda}\cup
T_{\bar\lambda}\cup\Sigma^\prime_{\bar\lambda}=\Omega$.
\end{lem}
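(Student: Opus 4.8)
The plan is to run exactly the argument of Lemma~\ref{le}, with the elementary monotonicity of $\log$ and of $s\mapsto s^{\alpha-N}$ replaced by the strict monotonicity of $g(s)=s^{\alpha-N}\log s$ already isolated in the preceding lemma; the hypothesis $diam\,\Omega<e^{\frac{1}{N-\alpha}}$ is precisely what confines every distance occurring below to the interval on which $g$ is strictly decreasing.

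First I would argue by contradiction: suppose $\Sigma_{\bar\lambda}\cup T_{\bar\lambda}\cup\Sigma^\prime_{\bar\lambda}\varsubsetneqq\Omega$, i.e. $\Omega_{\bar\lambda}\neq\emptyset$. Since $\Omega_{\bar\lambda}$ is open, it then has positive Lebesgue measure. Because $\Sigma^\prime_{\bar\lambda}$ is internally tangent to $\partial\Omega$ at $P_{\bar\lambda}\notin T_{\bar\lambda}$ and every point of $\Sigma^\prime_{\bar\lambda}$ has first coordinate exceeding $\bar\lambda$, the reflected point $P:=(P_{\bar\lambda})_{\bar\lambda}$ has $P_1<\bar\lambda$, lies in $\overline{\Sigma_{\bar\lambda}}\cap\partial\Omega$, and satisfies $u_{\bar\lambda}(P)=u(P_{\bar\lambda})$.

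Next I would evaluate the identity (\ref{pw}) at $x=P$. That identity is merely the algebraic splitting of the integral in (\ref{log}) over $\Sigma_\lambda$, $\Sigma^\prime_\lambda$ and $\Omega_\lambda$, followed by the change of variables $y\mapsto y_{\bar\lambda}$, so it holds for every $x\in\mathbb R^N$, in particular on $\partial\Omega$. It gives
\[
u_{\bar\lambda}(P)-u(P)=\int_{\Omega_{\bar\lambda}}\Bigl[\,|P-y|^{\alpha-N}\log|P-y|-|P_{\bar\lambda}-y|^{\alpha-N}\log|P_{\bar\lambda}-y|\,\Bigr]\,dy .
\]
For a.e. $y\in\Omega_{\bar\lambda}$ one has $y_1>\bar\lambda>P_1$, hence $|P_{\bar\lambda}-y|<|P-y|$; moreover $P,P_{\bar\lambda}\in\overline\Omega$ and $y\in\Omega$ force both distances to be $\le diam\,\Omega<e^{\frac{1}{N-\alpha}}$, so they lie where $g^\prime(s)=s^{\alpha-N-1}[(\alpha-N)\log s+1]<0$. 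Consequently $g(|P-y|)<g(|P_{\bar\lambda}-y|)$ on a set of positive measure and $\le$ elsewhere, so the integral is strictly negative; that is, $u(P_{\bar\lambda})=u_{\bar\lambda}(P)<u(P)$. Since $P,P_{\bar\lambda}\in\partial\Omega$ and $u$ is constant there, this is a contradiction, and therefore $\Omega_{\bar\lambda}=\emptyset$, which is the asserted identity.

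The computation is formally identical to Section~2, so the only step that genuinely needs attention is the distance bookkeeping in the last paragraph: one must check that $P$ really lies on $\partial\Omega$ with $P_1<\bar\lambda$ (so the comparison is between two honest boundary values and the inequality $|P_{\bar\lambda}-y|<|P-y|$ is strict a.e.\ on $\Omega_{\bar\lambda}$), and that $|P_{\bar\lambda}-y|$, with $P_{\bar\lambda}\in\overline{\Sigma^\prime_{\bar\lambda}}\subset\overline\Omega$ and $y\in\Omega$, does not exceed $diam\,\Omega$, so that $g$ is decreasing throughout the range of integration. Beyond this I do not anticipate any serious obstacle.
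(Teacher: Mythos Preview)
Your argument is correct and is exactly what the paper intends: the paper's own proof is the single sentence ``essentially the same as that of Lemma~\ref{le}'', and you have written out precisely that adaptation, using the strict decrease of $g(s)=s^{\alpha-N}\log s$ on $(0,e^{1/(N-\alpha)})$ from the preceding lemma in place of the elementary monotonicity used in Section~2. The only point worth noting is that the diameter hypothesis $diam\,\Omega<e^{1/(N-\alpha)}$, which you invoke, is not written into this lemma's statement in the paper but is tacitly carried over from the surrounding context; your use of it is both necessary and intended.
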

\begin{proof}

The proof is essentially the same as that of Lemma (\ref{le}).

\end{proof}

\begin{lem}
Suppose that $u(x)$ satisfies (\ref{log}) with $diam \Omega<
e^{\frac{1}{N-\alpha}}$ and  that the second case occurs: i.e.
$T_{\bar\lambda}$ reaches a position where is orthogonal to the
boundary of $\Omega$ at some point $Q$, then,
$\Sigma_{\bar\lambda}\cup
T_{\bar\lambda}\cup\Sigma^\prime_{\bar\lambda}=\Omega$.
\end{lem}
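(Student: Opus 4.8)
The plan is to reproduce the argument of Lemma~\ref{la} almost word for word; the single new ingredient is that, because $diam\,\Omega<e^{\frac{1}{N-\alpha}}$, the function $s\mapsto s^{\alpha-N}\log s$ is strictly decreasing on the range of distances that occur here, so its derivative $s^{\alpha-N-1}\big[(\alpha-N)\log s+1\big]$ has a fixed (negative) sign --- exactly the computation already recorded in the first lemma of this section. Suppose, for contradiction, that $\Sigma_{\bar\lambda}\cup T_{\bar\lambda}\cup\Sigma^\prime_{\bar\lambda}\varsubsetneqq\Omega$, i.e.\ $\Omega_{\bar\lambda}\neq\emptyset$, and fix a ball $B\subset\subset\Omega_{\bar\lambda}$; write $z$ for the coordinates of $Q$. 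Since $u$ is constant on $\partial\Omega$ and $\Omega\in C^1$, $\grad u$ is parallel to the normal at $Q$, and because $T_{\bar\lambda}$ is orthogonal to $\partial\Omega$ at $Q$ this gives $\frac{\partial u}{\partial x_1}\big|_Q=0$. Choose $x^i\in\Sigma_{\bar\lambda}\setminus T_{\bar\lambda}$ with $x^i\to z$; then $x^i_{\bar\lambda}\to z$ as well, the two points differ only in the first coordinate with $(x^i_{\bar\lambda}-x^i)\cdot e_1\to0^+$, and for $i$ large the first coordinate of every point of the segment joining $x^i$ and $x^i_{\bar\lambda}$ is as close to $\bar\lambda$ as we wish, while $y_1\geq\bar\lambda+c$ on $B$ for a fixed $c>0$ (recall $\Omega_{\bar\lambda}\subset\{x_1>\bar\lambda\}$); also $\delta\leq|x^i_{\bar\lambda}-y|<diam\,\Omega$ on $B$ for some fixed $\delta>0$.

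Next I would evaluate the difference formula (\ref{pw}) at $x=x^i$, which gives $u(x^i_{\bar\lambda})-u(x^i)=G(x^i)-G(x^i_{\bar\lambda})$ with $G(x):=\int_{\Omega_{\bar\lambda}}|x-y|^{\alpha-N}\log|x-y|\,dy$, and then apply the Mean Value Theorem to $G$, which is $C^1$ since $\alpha-N\geq2$ (so one may differentiate under the integral sign). Because $x^i$ and $x^i_{\bar\lambda}$ differ only in $x_1$, this produces
$$\frac{u(x^i_{\bar\lambda})-u(x^i)}{(x^i_{\bar\lambda}-x^i)\cdot e_1}=\int_{\Omega_{\bar\lambda}}|\bar x^i_{\bar\lambda}-y|^{\alpha-N-2}\big[(\alpha-N)\log|\bar x^i_{\bar\lambda}-y|+1\big]\,\big((y-\bar x^i_{\bar\lambda})\cdot e_1\big)\,dy$$
for some $\bar x^i_{\bar\lambda}$ on the segment between $x^i$ and $x^i_{\bar\lambda}$.

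Now I would read off the sign. Since $|\bar x^i_{\bar\lambda}-y|\leq diam\,\Omega<e^{\frac{1}{N-\alpha}}$ for every $y\in\Omega_{\bar\lambda}$, the bracket $(\alpha-N)\log|\bar x^i_{\bar\lambda}-y|+1$ is strictly negative there; on $B$ and for $i$ large, $(y-\bar x^i_{\bar\lambda})\cdot e_1=y_1-(\bar x^i_{\bar\lambda})_1>0$, so the integrand is negative on $B$ and in fact bounded above by a fixed negative constant there. The only part of $\Omega_{\bar\lambda}$ where $(y-\bar x^i_{\bar\lambda})\cdot e_1$ can have the opposite sign is the slab $\{\bar\lambda<y_1\leq(\bar x^i_{\bar\lambda})_1\}$, whose measure tends to $0$ as $i\to\infty$ and on which the integrand stays bounded (using $\alpha-N\geq2$, $|\bar x^i_{\bar\lambda}-y|\leq diam\,\Omega$, and $|y_1-(\bar x^i_{\bar\lambda})_1|\leq|\bar x^i_{\bar\lambda}-y|$). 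Hence, for $i$ large,
$$\frac{u(x^i_{\bar\lambda})-u(x^i)}{(x^i_{\bar\lambda}-x^i)\cdot e_1}\leq -c_0<0$$
for some constant $c_0>0$, whereas by the $C^1$ regularity of $u$ the left-hand side converges to $\frac{\partial u}{\partial x_1}\big|_Q=0$. This contradiction forces $\Omega_{\bar\lambda}=\emptyset$, i.e.\ the identity (\ref{key}); together with the preceding lemma (the first case) this establishes (\ref{key}) in both cases, whence Theorem~\ref{2t} follows as in Section~2.

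I expect the routine steps --- differentiating under the integral, the Mean Value Theorem, and the limit giving $\frac{\partial u}{\partial x_1}\big|_Q$ --- to be painless. As in Lemma~\ref{la}, the one genuinely delicate point is extracting the uniform lower bound $c_0>0$: one has to be sure that the positive‑integrand contribution from the shrinking slab near $T_{\bar\lambda}$ is $o(1)$ and so cannot cancel the strictly negative contribution from $B$, and it is precisely here that $\alpha-N\geq2$ (keeping $|x-y|^{\alpha-N-2}$ under control near $y=\bar x^i_{\bar\lambda}$) together with the diameter restriction (fixing the sign of the bracket) are both used.
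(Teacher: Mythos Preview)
Your argument is correct and is essentially the paper's own proof: set up the contradiction with $\Omega_{\bar\lambda}\neq\emptyset$, use $\partial u/\partial x_1|_Q=0$, apply the Mean Value Theorem to the difference formula~(\ref{pw}), and extract a uniform negative bound from the contribution of a fixed ball $B\subset\subset\Omega_{\bar\lambda}$ using the diameter hypothesis to fix the sign of $(\alpha-N)\log|\cdot|+1$. In fact you are slightly more careful than the paper, which passes directly from the integral over $\Omega_{\bar\lambda}$ to the integral over $B$ without discussing the thin slab $\{\bar\lambda<y_1\le(\bar x^i_{\bar\lambda})_1\}$ where $(y-\bar x^i_{\bar\lambda})\cdot e_1$ may change sign; your observation that this slab has vanishing measure while the integrand stays bounded there is exactly what is needed to justify that step.
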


\begin{proof}

The argument follows that of the proof of  Lemma (\ref{la}). Since
$u(x)$ is constant on $\partial\Omega$ and $\Omega\in C^1$,
 $\frac{\partial u}{\partial x_1}|_Q=0$. We denote the coordinate of $Q$ by $z$. Suppose
 $\Omega_{\bar\lambda}\not=\emptyset$, there exits a ball $
 B\subset\subset
 \Omega_{\bar\lambda}$.
 Choosing a sequence $\{x^i\}^\infty_1\in\Sigma_{\bar\lambda}\setminus T_{\bar\lambda}$ such that
$x^i\to z$ as $i\to \infty$,  then $x^i_{\bar\lambda}\to z$ as $i\to
\infty$. Since $ B\subset\subset
 \Omega_{\bar\lambda},$ we find a $ \delta$ such that $ diam
 \Omega>|x^i_{\bar\lambda}-y|>\delta$ for any $y\in B$ and any $x^i_{\bar\lambda}$.

From (\ref{pw}), by Mean Value Theorem,
\begin{eqnarray}
\frac{u(x^i_{\bar\lambda})-u(x^i)}{(x^i_{\bar\lambda}-x^i)\cdot
e_1}&=&\int_{\Omega_{\bar\lambda}}\frac{|x^i-y|^{\alpha-N}\log|x^i-y|
-|x^i_{\bar\lambda}-y|^{\alpha-N}\log|x^i_{\bar\lambda}-y|}
{(x^i_{\bar\lambda}-x^i)\cdot e_1} \,dy \nonumber \\
&=&\int_{\Omega_{\bar\lambda}} -|\bar
x^i_{\bar\lambda}-y|^{\alpha-N-2}((x^i_{\bar\lambda}-y)\cdot
e_1)((\alpha-N)\log|\bar x^i_{\bar\lambda}-y|+1) \,dy
\nonumber \\
&<&\int_{B} -|\bar
x^i_{\bar\lambda}-y|^{\alpha-N-2}((x^i_{\bar\lambda}-y)\cdot
e_1)((\alpha-N)\log|\bar x^i_{\bar\lambda}-y|+1) \,dy
\nonumber \\
 &<& -C. \nonumber \\
 \label{22}
\end{eqnarray}
Where $\bar x^i_{\bar\lambda}$ is some point between $
x^i_{\bar\lambda}$ and $x^i$. The assumption $diam \Omega<
e^{\frac{1}{N-\alpha}}$ is applied in the last inequalities.
Consequently, (\ref{22}) contradicts  $\frac{\partial u}{\partial
x_1}|_Q=0$ as $i\to\infty$. Therefore, the lemma is verified.
\end{proof}

With the help of the above two lemmas, Theorem \ref{2t} is
confirmed.


\begin{thebibliography}{CL}

\bibitem[AB]{AB} A. Aftalion and J.  Busca, \,\, Radial symmetry of overdetermined boundary-value problems in exterior domains, \,\, Arch. Rational Mech. Anal. 143 (1998), no. 2, 195-206.

\bibitem[Al]{Al} A. D. Alexandroff, \,\, A characteristic property
of the sphere, \,\, Ann. Mat. Pura Appl. 58(1962), 303-354.


\bibitem[B]{B} A. Bennett,\,\, Symmetry in an overdetermined four
order ellipitc boundary value problem, \,\, SIAM J. Math. Anal.
17(1986), 1354-1358.

\bibitem[BK]{BK} M. Barkatou and S. Khatmi, \,\, Symmetry result for some overdetermined value problems,
ANZIAM J. 49(2008), 479-494.

\bibitem[BNST]{BNST} B, Brandolini, C. Nitsch, P. Salani and C.
Trombetti, \,\, Serrin-Type overdetermined problems: an alternative
proof, \,\,  Arch. Rat. Mech. Anal. 190(2008), 267-280.


\bibitem[BNST1]{BNST1} B. Brandolini, C. Nitsch, P. Salani and C.   Trombetti,\,\,  On the stability of the Serrin problem, \,\, J. Differ.
Equ. 245(2008), 1566-1583.

\bibitem[CGS]{CGS}  L. Caffarelli, B. Gidas, and J. Spruck, \,\,
Asymptotic symmetry and local behavior of semilinear elliptic
equations with critical Sobolev growth, \,\, Comm. Pure Appl. Math.
 XLII(1989), 271-297.

\bibitem[CL1]{CL1} W. Chen and C. Li, \,\, Methods on Nonlinear Elliptic Equations \,\, AIMS book series, Volume 4, 2010.

\bibitem[CLO]{CLO} W. Chen, C. Li, and B. Ou, \,\, Classification
of solutions for an integral equation,  \,\, Comm. Pure and Appl.
Math. 59(2006), 330-343.

\bibitem[CZ]{CZ} W. Chen and J. Zhu, \,\, Radial Symmetry and Regularity of Solutions for Poly-harmonic Dirichlet
Problems, to appear in J. Math. Anal. Appl.


\bibitem[CS]{CS}
A. Cianchi and P. Salani,  \,\, Overdetermined anisotropic elliptic
problems, Math. Ann. 345 (2009), no. 4, 859-881.

\bibitem[EP]{EP} A. Enciso and D. Peralta-Salas, \,\, Symmetry for an overdetermined boundary problem in a punctured domain, \,\, Nonlinear Anal. 70 (2009), no. 2, 1080-1086.

\bibitem[FG]{FG} I. Fragala and F. Gazzola, \,\, Partially overdetermined elliptic
boundary value problems, \,\, J. Diferential Equations 245(2008),
1299-1322.

\bibitem[FGK]{FGK} I. Fragala, F. Gazzola, B. Kawohl, \,\,Overdetermined problems with possibly degenerate ellipticity, a geometric approach,\,\, Math. Zeit. 254, 2006, 117-132.


\bibitem[FK]{FK} A. Farina and B. Kawohl, \,\,Remarks on an overdetermined boundary value
problem, \,\,  Calc. Var. Partial Differential Equation 31 (2008),
no. 3, 351-357.

\bibitem[FV]{FV} A. Farina and E. Valdinoci, \,\, Flattening results for elliptic PDEs in unbounded domains with applications to overdetermined problems,\,\, Arch. Rational Mech. Anal. 195(2010), 1025-1058.


\bibitem[Fr]{Fr} L.E. Franekel, \,\, Introduction to maximum
principles and symmetry in ellipitc problems, \,\, Cambridge tracts
in mathematics, Volume 128, Cambridge University  Press, London,
2000.


\bibitem[GNN]{GNN} B. Gidas, W. Ni, and L. Nirenberg, \,\, Symmetry of related properties via the maximum principle,
\,\, Comm. Math. Phys. 68(1979), 209-243.

\bibitem[GS]{GS} V. Goyal and P.W. Schaefer,\,\, On a conjecture for an overdetermined problem for the biharmonic operator, Applied Mathematics Letters 21 (2008), 421-424.

\bibitem[GL]{GL} N. Garofalo and J. L. Lewis, \,\, A symmetry result
related to some overdetermined boundary value problems, \,\,
American Journal Math. 111(1989), 9-33.

\bibitem[G]{G} A. Greco, \,\, Radial Symmetry and uniqueness for an overdetermined problem, \,\, Math. Method in the Appl. Sci. (24) 2001, 103-115.

\bibitem[K]{K} C. Kenig, \,\,Harmonic Analysis techniques for second order elliptic boundary value problems, CBMS Series, vol. 83, Amer. Math. Soc., Providence, RI, 1994.

 
\bibitem[HLZ]{HLZ} X. Han, G. Lu, and J. Zhu, \,\, An overdetermined
problem in Bessel-potential, to appear.

\bibitem[HPP]{HPP} A. Henrot, G. Philippin and H. Prebet, \,\, Overdetermined problems on ring shaped domains, \,\, Adv. Math. Sci. Appl. 9 (1999), no. 2, 737-747.

\bibitem[Li]{Li} Y. Y. Li, \,\, Remark on some conformally invariant integral equations: the method of moving
spheres, \,\, J. Eur. Math. Soc. 6(2004), 153-180.

\bibitem[Liu]{Liu} G. Liu, \,\,  Symmetry theorems for the overdetermined eigenvalue
problems, \,\, J. Diferential Equations 233(2007), 585-600.

\bibitem[LL]{LL} E. Lieb and M. Loss, \,\, {\em Analysis.} \,\, 2nd edition, American
Mathematical Society, Rhode Island, 2001.

\bibitem[LSW]{LSW} D. Li, G. Strohmer and L. Wang, \,\, Symmetry of
integral equations on bounded domain, \,\, Proc. AMS. 137(2009),
3695-3702.

\bibitem[Lim]{Lim} M. Lim, \,\,Symmetry of a boundary integral operator and a characterization of a ball, \,\,Illinois Journal of Mathematics 45 (2001), 537-543.

\bibitem[M]{M} E.Martensen,\,\, Eine Integralgleichung fur die log.
Gleichgewichtwverteilung und die Krummung der Randkurve eines
Gebies, \,\, Z. angew. Math. Mech. 72(1992), 596-599.

\bibitem[MR]{MR} O. Mendez and W. Reichel, \,\, Electrostatic characterization of spheres,\,\, Forum Math. 12 (1995), 223-245.

\bibitem[PP]{PP} L. Payne and G. Philippin,  \,\,Some overdetermined boundary value problems for harmonic functions, \,\,Z. Angew. Math. Phys. 42 (1991), no. 6, 864-873.

\bibitem[PS]{PS} L. Payne and  P. Schaefer, \,\,On overdetermined boundary value problems for the biharmonic operator, \,\, J. Math. Anal. Appl. 187 (1994), no. 2, 598-616.


\bibitem[P]{P} J. Prajapat, \,\, Serrin's result for domains with a corner or
cusp, \,\, Duke Math. J. 91(1998) 29-31.

\bibitem[R1]{R1} W. Reichel, \,\, Radial symmetry for elliptic boundary
value problems on exterior domain, \,\, Arch. Rat. Mech. Anal.
137(1997), 381-394.

\bibitem[R2]{R2} W. Reichel, \,\, Characterization of balls by
Riesz-Potentials, \,\, Annali. di. Matematica. 188(2009), 235-245.

\bibitem[Sh]{Sh} H. Shahgholian, \,\, A characterization of the sphere in terms of single-layer potentials, \,\,Proc. Amer. Math. Soc. 115 (1992), 1167-1168.

\bibitem[Se]{Se} J. Serrin, \,\, A symmetry problem in potential
theory, \,\, Arch. Rat. Mech. Anal. 43(1971), 304-318.

\bibitem[Si]{Si} B. Sirakov, \,\, Symmetry for exterior elliptic problems
and two conjectures in potential theory, \,\,Ann. Inst. H. Poincar
$\acute{e}$, Anal. Non Lin
 $\acute{e}$ aire. 18(2001), 135-156.

 \bibitem[WX]{WX} G. Wang and C. Xia, \,\,A characterization of the Wulff shape by an overdetermined anisotropic PDE, \,\,Arch. Rational Mech. Anal. 2010.

\bibitem[W]{W} H. Weinberger, \,\, Remark on the preceeding paper of
Serrin, \,\, Arch. Rat. Mech. Anal. 43(1971), 319-320.

\end{thebibliography}
\end{document}